\numberwithin{equation}{section}
\theoremstyle{plain}
\newtheorem{theorem}{Theorem}
\newtheorem{lemma}{Lemma}[section]
\newtheorem{proposition}{Proposition}[section]
\newtheorem{corollary}{Corollary}[section]
\theoremstyle{definition}
\newtheorem{definition}{Definition}
\newtheorem{remark}{Remark}
\newcommand{\Z}{\mathbb{Z}}
\newcommand{\C}{\mathbb{C}}
\newcommand{\R}{\mathbb{R}}
\renewcommand{\O}{\mathcal{O}}
\newcommand{\s}{\mathbb{S}}
\newcommand{\Sing}{\operatorname{Sing}}
\newcommand{\Ric}{\operatorname{Ric}}
\renewcommand{\d}{\mathrm{d}}
\newcommand{\llb}{\llbracket}
\newcommand{\rrb}{\rrbracket}
\newcommand{\eps}{\varepsilon}
\begin{document}

\title[Harmonic maps to $\s^1$ with higher dimensional singular set]{Harmonic maps to the circle with higher dimensional singular set}
	
    \author[M. Badran]{Marco Badran}
	\address{ETH Z\"urich, Department of Mathematics, Rämistrasse 101, 8092 Zürich, Switzerland.}
 	\email{marco.badran@math.ethz.ch}	
 	
 	\begin{abstract}
 		In a closed, oriented ambient manifold $(M^n,g)$ we consider the problem of finding $\mathbb{S}^1$-valued harmonic maps with prescribed singular set. We show that the boundary of any oriented $(n-1)$-submanifold can be realised as the singular set of an $\s^1$-valued map, which is classically harmonic away from the singularity and distributionally harmonic across. If the singular set $\Gamma$ is also embedded and $C^{1,1}$, we consider three variational relaxations of the same problem and show that the energy of minimisers converges, after renormalisation, to the volume $\mathcal{H}^{n-2}(\Gamma)$ plus a lower-order ``renormalised energy'' --- common to all relaxations --- describing an energetic interaction between different components of the singular set. 
 	\end{abstract}
    
    \maketitle

\section{Introduction}

\subsection{Singular harmonic maps and renormalised energies}
Singular harmonic maps to the circle were first studied in the pioneering work of Bethuel, Brezis and H\'elein  \cite{Bethuel-Brezis-Helein1994} in relation with Ginzburg--Landau vortices. Even though the class of $W^{1,2}(\Omega,\s^1)$ maps with trace $\varphi\colon \partial\Omega\to\s^1$ is empty whenever $\deg(\varphi)\ne 0$, they showed that minimisers $(u_\epsilon)$ of the Ginzburg--Landau energy
\begin{equation}\label{eq: GL}
	E_\epsilon(u)=\int_\Omega |\d u|^2+\frac1{\epsilon^{2}}(1-|u|^2)^2,
\end{equation}
on a simply connected planar domain $\Omega$, converge in the singular limit to a map $u\colon \Omega\to \s^1$, harmonic away from $|\deg(\varphi)|$ singularities and distributionally harmonic everywhere, thus emphasising the relevance of \eqref{eq: GL} as a variational relaxation to the (non-variational) problem of finding harmonic maps with prescribed point singularities. Furthermore, they showed that maps with these properties exist for any prescribed singular set and boundary datum, under a topological compatibility condition. They call such maps \emph{canonical harmonic maps}.

\medskip

 The position of these singularities is mediated by an interaction energy $W$ between the points of the singular set --- called renormalised energy --- obtained by the desingularised limit
\begin{equation*}
	W=\lim_{\epsilon\to 0}\left(E_\epsilon(u_\epsilon)-2\pi|\deg(\varphi)||\log\epsilon |\right).
\end{equation*}
The same energy appears also in the context of $p$-harmonic maps, with $p<2$, again as a desingularised limit of the energy of minimisers \cite{Hardt-Lin1995}. This suggests that the renormalised energy is really a part of the (infinite) energy of singular $\s^1$-valued harmonic maps, that manifests only when the Dirichlet energy is relaxed in some way.

\medskip

There are several directions in which this result can be generalised. For instance, an analogous interaction energy was found for point singularities in higher codimension via $p$-harmonic relaxation \cite{Hardt-Lin-Wang1997} or for general target manifolds \cite{Monteil-Rodiac-VanSchaftingen2021,Monteil-Rodiac-VanSchaftingen2022,VanSchaftingen-VanVaerenbergh2023}. Ignat and Jerrard \cite{Ignat-Jerrard2021} showed the validity of a similar result for unimodular vector fields on a closed Riemannian surface with a finite number of prescribed singular points, detecting a very interesting interplay with the genus of the surface. For higher dimensional singularities, this interaction is understood in the case of nearly parallel vortex filaments in the three dimensional Euclidean space $\R^3$ via Ginzburg--Landau \cite{DelPinoKowalczyk2008,ContrerasJerrard2017, Davila-delPino-Medina-Rodiac2022}.
 
\medskip

In this work, we advance this program with a twofold goal:

\begin{itemize}
	\item Give a complete classification of the set of $\s^1$-valued maps on a closed Riemannian manifold $(M^n,g)$ with prescribed (2-codimensional) singular set, classically harmonic away from the singularities and distributionally harmonic on the entire $M$. 
	\item Describe the renormalised energy of the singular set of $\s^1$-valued maps with higher-dimensional singularities, and its role in three different relaxations of the Dirichlet energy.
\end{itemize}
For instance, we establish that the interaction between two embedded, disjoint $C^{1,1}$ curves $\gamma_1,\gamma_2\subset \R^3$ is given by 
\begin{equation}\label{eq: interaction}
	\int_{\gamma_1}\int_{\gamma_2}\frac{\dot\gamma_1(x)\cdot \dot\gamma_2(y)}{|x-y|}\d\mathcal{H}^1(x)\d\mathcal{H}^1(y),
\end{equation}
which is consistent with the known formula for nearly parallel vortex filaments. This answers a question by Rivi\`ere \cite{Riviere1996}. Here $\dot\gamma_k$ is the unit tangent vector to the curve; it can be noted from the numerator that orthogonal parts of the filament do not interact, while the interaction is strongest in the parallel case, with a sign that depends on the orientation of the curve. The full energy includes also the interaction between different parts of a single curve, obtained by a desingularisation of the same formula.  We also remark that interaction \eqref{eq: interaction} corresponds exactly to the magnetic inductances generated by electric currents running with fixed intensities in a prescribed set of  wires $\gamma_1,\dots,\gamma_m$, where the direction of the current determines the orientation. This can be shown simply by Maxwell's equations, see Appendix \ref{app: ren en Rn}.

\subsection{Main results}
The first result of this work is a complete characterisation of the $\s^1$-valued harmonic maps on a manifold with prescribed singular set. The multiplicity of these maps (up to constant phases) is determined by the topology of the ambient space $M$, in particular by the non-vanishing of the first Betti number $b_1(M)$, counting the one dimensional ``holes'' of $M$.

\medskip

\noindent{\bf Theorem A.} Let $\Gamma$ be a smooth submanifold of $(M,g)$ which is the boundary of an orientable hypersurface $\Sigma$. Then there is a collection of maps $u_\alpha\colon M\to \s^1$, parametrised by $\alpha\in H^1(M,2\pi\Z)$, such that 
\begin{itemize}
	\item $\Sing(u_\alpha)=\Gamma$,
	\item $u_\alpha$ is harmonic on $M\setminus\Gamma$,
	\item $u_\alpha$ is distributionally harmonic on $M$.
\end{itemize}
Moreover, if any other map satisfies these three conditions, it is equal to one of the $u_\alpha$s up to a constant phase.

\smallskip
This result is a consequence of Theorem \ref{thm: harmonic maps} below, whose statement is analogous but with $\Gamma$ being in the more general class of integral currents. Denoting with $\mathscr{H}(M,\Gamma)$ the set of harmonic maps arising from Theorem A, we make a few observations:
\begin{itemize}
	\item The requirement that $\Gamma=\partial\Sigma$ is necessary for the existence of a regular map $u\colon M\setminus \Gamma\to\s^1$ with singular set $\Gamma$, see for instance \cite[Theorem 3.8]{Alberti-Baldo-Orlandi2003}.
	\item The singular maps $u_\alpha\in \mathscr{H}(M,\Gamma)$ belong naturally to the space $B2V$ of functions with bounded higher variation defined by Jerrard and Soner \cite{Jerrard-Soner2002}.
	\item Even in the planar case, topology breaks uniqueness. This can be already seen in multiply connected planar domains: in the annulus $B_2\setminus \overline{B_1}$, any singular harmonic maps $u$ can be replaced by $ue^{if_k(|x|)}$, where $f_k$ solves 
\begin{equation*}
	\begin{cases}
		f_k''+\tfrac1r f_k'=0& \text{in }(1,2)\\
		f_k(1)=0,\ f_k(2)=2k\pi,
	\end{cases}
\end{equation*}
without changing harmonicity, singular set or boundary datum of $u$.
In the same way, in Theorem A all possible choices of harmonic maps are parametrised by the ``one dimensional topology'' of the ambient, represented by the integral cohomology $H^1(M,2\pi\Z)$.
	\item It is possible to desingularise \emph{canonically} each $u_\alpha\in\mathscr{H}(M,\Gamma)$ and make sense of a (further) renormalised energy $e(u_\alpha)$. This notion could be in principle useful to define a ``canonical'' harmonic map as the least energetic one in $\mathscr{H}(M,\Gamma)$, but uniqueness of the minimisers generally fails (see Remark \ref{rem: canonical} below).
\end{itemize}

\medskip

In the second part of this work, we specialise to the case in which $\Gamma$ is a $C^{1,1}$ embedded submanifold of codimension 2, and we study three variational relaxations of the problem of finding harmonic maps with prescribed singular set. 
\begin{enumerate}
	\item Given $\delta>0$ let $T_\delta$ be a $\delta$-tubular neighbourhood of $\Gamma$ and $M_\delta\coloneqq M\setminus T_\delta$. We consider the minimisation problem 
	\begin{equation}\label{eq: intro delta}
		\inf\int_{M_\delta}|\d u|^2,
	\end{equation}
	among all $u\in W^{1,2}(M_\delta,\s^1)$ with a fixed nonzero degree around $\Gamma$. Prescribing the degree ``forces the singularity'' in the limit $\delta\to 0$.
	\item For $p<2$, we consider
	\begin{equation*}
		\inf\int_{M}|\d u|^p
	\end{equation*}
	among all maps $W^{1,p}(M,\s^1)$ with prescribed singular set, in the sense of Definition \ref{def: singular set} below.
	\item For $s\in(\tfrac12,1)$, we consider
	\begin{equation*}
		\inf\, [u]^2_{H^s(M)}
	\end{equation*}
	among all maps $H^s(M,\s^1)$ with prescribed singular set, in the sense of Definition \ref{def: singular set Hs} below.  
\end{enumerate}
The singular set of $u$ in Definitions \ref{def: singular set} and \ref{def: singular set Hs} is defined in terms of its distributional Jacobian $Ju$. This is not equivalent to minimising among functions that are smooth outside of the prescribed singularity, since $Ju$ reads only the topologically meaningful part of the singular set (see Remark \ref{rem: ABO} below).

\medskip

We prove that minimisers $u_\delta$, $u_p$ and $u_s$ for the above problems exist\footnote{Plain existence of $u_p$ and $u_s$ follows by direct method, since the map $u\mapsto Ju$ is weakly sequentially continuous in both topologies, see \cite[Proposition 8.1]{Brezis-Mironescu2021}.}, and we give a precise description, using $\mathscr{H}(M,\Gamma)$. Each of the above problem is in fact equivalent to two nested minimisations. First, the energy is minimised among all maps that can be expressed as $e^{i\varphi}u_\alpha$ for a fixed $u_\alpha\in \mathscr{H}(M,\Gamma)$ and $\varphi\in W^{1,2}(M_\delta,\R)$ (resp.~ $W^{1,p}(M,\R)$, $H^s(M,\R)$), which is a scalar problem. Then, we minimise the result among all $u_\alpha\in \mathscr{H}(M,\Gamma)$, which is a discrete problem. In other words, space of competitors can be broken down in ``topological sectors'', where every element differs from a given harmonic map by a phase. This allows to give a very precise description of the minimisers in terms of the ``optimal'' singular harmonic map, namely the element of  $\mathscr{H}(M,\Gamma)$ whose desingularised energy is the lowest. This last fact is a consequence of the following asymptotic expansions, which is the second result of this work.

\medskip

\noindent{\bf Theorem B.} There is an interaction energy $W_M(\Gamma)$ between different parts of the singular set such that 
 \begin{align*}
	\int_{M_\delta}|\d u_\delta|^2&=2\pi\mathcal{H}^{n-2}(\Gamma)\log(1/\delta)+W_M(\Gamma)+e(u_\alpha)+o(1)\\
	\int_{M}|\d u_p|^p&=\frac{2\pi\mathcal{H}^{n-2}(\Gamma)}{2-p}+W_M(\Gamma)+e(u_\alpha)+o(1)\\
	[u_s]^2_{H^s(M)}&=\frac{2\pi\mathcal{H}^{n-2}(\Gamma)}{2-2s}+W_M(\Gamma)+e(u_\alpha)+o(1)
\end{align*}
as $\delta\to 0$, $p\to 2$ and $s\to 1$, respectively. Here $e(u_\alpha)$ is the desingularised energy of the harmonic map $u_\alpha\in \mathscr{H}(M,\Gamma)$ belonging to the same topological sector as $u_\delta$ (resp.~ $u_p$, $u_s$).
 
\medskip

Theorem B is a consequence of Theorems \ref{thm: expansion udelta} and \ref{thm: expansion p s} below. We make three further remarks.
\begin{itemize}
	\item The interaction energy $W_M(\Gamma)$ depends exclusively on the singular set and the ambient metric; not on the topological sector of the minimisers.
	\item Given the above expansions for the $W^{1,p}$ and $W^{s,2}$ relaxations, it is natural to conjecture that a similar formula holds for mixed-type $W^{s,p}$ relaxations, where $sp\nearrow 2$. In particular, if $u_{s,p}$ minimises the $W^{s,p}$ seminorm among all $W^{s,p}(M,\s^1)$ maps with prescribed distributional Jacobian, we expect that 
\begin{equation*}
	[u_{s,p}]_{W^{s,p}(M)}^p=\frac{2\pi\mathcal{H}^{n-2}(\Gamma)}{2-sp}+W_M(\Gamma)+e(u_\alpha)+o(1)
\end{equation*}
as $sp\to 2$.
\item A direct consequence of Theorem B is an asymptotic expansion for the least energy to pay to prescribe the distributional Jacobian \cite[\S1.8 and \S8.3]{Brezis-Mironescu2021}. More precisely, the energies of $u_p$ and $u_s$ correspond to 
\begin{equation*}
	{S}_{p,s}(v)\coloneqq\inf\{\, [u]_{W^{s,p}}^p:Ju=Jv\}
\end{equation*} 
in the cases $(s,p)=(1,p)$ and $(s,2)$, respectively. Here, $v$ is any map whose distributional Jacobian describes the singular set $\Gamma$ --- for instance any $v\in \mathscr{H}(M,\Gamma)$.
\end{itemize}

\subsection{Area in higher codimension via singular limits: an outlook}

The connection between relaxations of harmonic maps and the area of the singular set has been long known. In the Ginzburg--Landau context, this connection has been explored, among many works, in \cite{Bethuel-Brezis-Helein1993, Bethuel-Brezis-Helein1994, Struwe1994, Jerrard-Soner2002CalcVar, Jerrard-Soner2002JFA, Alberti-Baldo-Orlandi2005,Mesaric2009,Stern2021, Colinet-Jerrard-Sternberg2021, DePhilippis-Pigati2024} via $\Gamma$-convergence and energy concentration. Similar results have been achieved for $p$-harmonic maps, see the works \cite{Hardt-Lin1987,Hardt-Lin1995,Hardt-Lin-Wang1997,Stern2020}. 

\medskip

In the codimension one case, the Allen--Cahn model provides natural analogue of \eqref{eq: GL}. In the works of Guaraco and Gaspar--Guaraco \cite{Guaraco2018,Gaspar-Guaraco2018} this model was successfully employed for the construction of critical points of the area functional via min-max on the equation, rather than on the surface itself, building on the prior regularity theory of \cite{Hutchinson-Tonegawa2000,Tonegawa2005,Tonegawa-Wickramasekera2012}. This program was brought forward in three dimensions in \cite{Chodosh-Mantoulidis2020}, where the authors managed to perform a delicate passage to the limit $\eps\to 0$ and showed the validity of the multiplicity one conjecture, as well as reproving Yau's conjecture on the existence of infinitely many minimal surfaces in closed 3-manifolds.

\medskip

A very promising alternative to the Allen--Cahn equation is given by nonlocal minimal surfaces. Initially introduced in \cite{Caffarelli-Roquejoffre-Savin2010}, those objects recently received a renewed interest in a program of classical minimal hypersurface approximation via nonlocal minmax \cite{Chan-Dipierro-Serra-Valdinoci2023, Caselli-Florit-Serra2024Yau,Florit2024}. 

\medskip

It seems only natural then to try to apply the same min-max scheme also in higher codimensional cases, for instance by means of Ginzburg--Landau or $p$-harmonic relaxation. The problem arising from this approach is twofold: first, the natural uniform bound on the energy is not enough to guarantee concentration into an integral varifold, as shown in \cite{Pigati-Stern2023}. Even worse, if the ambient manifold has nontrivial one-dimensional topology (e.g.~ $M=N\times\s^1$) then the min-max procedure could pick up solutions without any nodal/singular set, generating a diffuse limit, see \cite{Stern2020,Stern2021}. An alternative approach is given by the Yang--Mills--Higgs equation, a modification of Ginzburg--Landau with features much closer to Allen--Cahn, which has proven to be a successful tool in the study of area in codimension two \cite{Riviere1996, Pigati-Stern2021, Parise-Pigati-Stern2024, Badran-DelPino2023, Badran-DelPino2024, MarxKuo2023, Liu-Wei-Ye2024, Liu-Ma-Wei-Wu2021}.

\medskip

Recently, Serra \cite{Serra2024} proposed another method, generalising the approximation by $s$-minimal hypersurfaces, based on a notion of fractional area in codimension $k\geq 2$. Given a smooth, codimension $k$ submanifold $\Gamma$ and $s\in(0,1)$ we define its fractional mass as 
\begin{equation}\label{eq: fractional mass}
	\mathbb{M}^{n-k}_s(\Gamma)\coloneqq \inf\, [u]_{H^{(k-1+s)/2}(M)}^2
\end{equation}
where the infimum is taken over all maps $u\colon M\setminus\Gamma\to\s^{k-1}$ that are \emph{linked} with $\Gamma$ and $H^{(k-1+s)/2}(M)$ is the fractional Sobolev space on a closed manifold $M$, introduced in  \cite{Caselli-Florit-Serra2024}. Fractional minimal surfaces of codimension $k$ as the ones who are critical for \eqref{eq: fractional mass}. This definition can be seen as a natural generalisation of nonlocal minimal surfaces, when thought as singular set of critical points of the $H^{s/2}$ seminorm among functions with values in $\s^0=\{-1,+1\}$. The minimiser in \eqref{eq: fractional mass} is a fractional harmonic map, first introduced in \cite{DaLio-Riviere2011}. The linking condition enforces a singularity along $\Gamma$ that weights asymptotically as the mass $\mathcal{H}^{n-k}(\Gamma)$. This was made precise in \cite{CaselliFregugliaPicenni2024}, where the authors proved $\Gamma$-convergence in the case $k=2$. 

\medskip

This approach is potentially more powerful than a direct min-max on the energy. First starting from the submanifolds rules out problems related to non-integrality of the limit. Secondly, the maps arising have the property of being minimising outside of their singular set, which ensures that the minimisers in \eqref{eq: fractional mass} cannot ``warp'' too many times around a one-dimensional hole, which in turn seems to rule out potential diffuse limits. 

\medskip

If the minimisation \eqref{eq: fractional mass} is taken among $H^s$ competitors with the same singular set in the sense of the distributional Jacobian, then by definition the map $u_s$ arising from fractional relaxation with prescribed singular set $\Gamma$ is precisely the minimiser of \eqref{eq: fractional mass} in the case $k=2$, and its $H^s$ energy is the fractional mass. In particular, renormalising the third formula of Theorem B (multiplying by $(1-s)$) provides an asymptotic expansion of the fractional area of $\Gamma$ and reveals a lower order interaction between different components of $\Gamma$ itself, which vanishes in the local case $s=1$. This fact is also consistent with the codimension 1 case, where the interaction between different hypersurface sheets is described by the Davila--del Pino--Wei system \cite{Davila-delPino-Wei2018}. Moreover, the above approach could be extended to $W^{1,p}$ or, more generally, $W^{s,p}$ notions of mass, defining
\begin{equation*}\label{eq: fractional mass}
	\mathbb{M}^{n-k}_{s,p}(\Gamma)\coloneqq \inf\, [u]_{W^{s,p}}^p
\end{equation*} 
where $s>0$, $1\leq p<\infty$, $sp<k$ and the infimum is taken over all $u\in W^{s,p}(M,\s^1)$ with singular set $\Gamma$. The second formula in Theorem B then gives an asymptotic of $\mathbb{M}_{1,p}^{n-2}(\Gamma)$ as $p\to 2$.

\subsection{Plan of the paper} The paper is organised as follows:
\begin{itemize}
	\item In Section \ref{sec: currents} we recall several notions that will be helpful throughout the paper, such as differential forms, currents, the Hodge decomposition, the Hodge laplacian and the distributional Jacobian. 
	\item In Section \ref{sec: HM} we introduce the set $\mathscr{H}(M,\llb\Gamma\rrb)$ of harmonic maps to the circle which are distributionally harmonic on $M$, singular in $\Gamma$ and classically harmonic outside of their singular set, modulo constant phases. We relate the multiplicity of those maps to the topology of $M$, giving an explicit parametrisation via the integral cohomology group $H^1(M,2\pi\Z)$. In the embedded, $C^{1,1}$ case, we also define the renormalised energy $W_M(\Gamma)$ and show that the energy of the harmonic maps on the complement of a $\delta$-neighbourhood of $\Gamma$ features $W_M(\Gamma)$ at second order.
	\item In Section \ref{sec: minimisers} we introduce the three variational relaxations of the harmonic map with prescribed singularity problem. Using the class $\mathscr{H}(M,\llb\Gamma\rrb)$, we show that the whole variational problem can be broken down in the two nested minimisations described above. In order to show convergence to the energy expansion, we proceed in two steps:
	
	\smallskip
	
	\begin{itemize}
	\item[1.] We prove directly the expansion for $u_\delta$, showing that its energy is close to the energy of a map in $\mathscr{H}(M,\llb\Gamma\rrb)$, for which we already know the expansion. This requires a regularity theory near $\partial M_\delta$, which is obtained through a De Giorgi iteration scheme.
	\item[2.] We prove the expansion for $u_p$ and $u_s$ via comparison with $u_\delta$, using the fact that the problems they minimise are, in a way, ``close''. 
	\end{itemize}
	
	\item In Appendix \ref{app: ren en Rn}, we give explicit expressions of the renormalised energy in $\R^n$ and explain how it related with the theory of electromagnetism.
	\end{itemize}
\bigskip

\noindent{\bf Acknowledgments:} The author is truly grateful to Joaquim Serra for his constant support and valuable advice. He also thanks Jaume de Dios and Gerard Orriols for many interesting discussions and helpful comments. This project has received funding by the European Research Council under Grant Agreement No 948029.

\section{Currents and the distributional Jacobian}\label{sec: currents}
\subsection{Integral currents and the Hodge Laplacian}
Let $(M,g)$ be a smooth, closed, oriented $n$-dimensional Riemannian manifold. For any $0\leq k\leq n$, we denote with $\Lambda^k(M)$ the vector bundle on $M$ of alternating $k$-linear functionals on $TM\times \cdots \times TM$ ($k$ factors) and with $D^k(M)$ the space of smooth, compactly supported sections $M\to \Lambda^k(M)$, namely the space of differential $k$-forms on $M$. Let $D_k(M)$ be the space of $k$-currents on $M$, namely the space  continuous linear functionals on $D^k(M)$, endowed with the usual locally convex topology (see e.g.~ \cite{Simon1983}). We denote with $\d\colon D^k(M)\to D^{k+1}(M)$ and $\d^*\colon D^k(M)\to D^{k-1}(M)$ the exterior differential and codifferential operators, respectively. If $\alpha,\beta\in D^k(M)$ we denote their pointwise product via the metric $g$ with $(\alpha,\beta)_g$. Lastly, the Hodge star operator $\star\colon D^k(M)\to D^{n-k}(M)$ is defined via the pairing
\begin{equation*}
	\int_M\alpha\wedge\star\beta=\int_M(\alpha,\beta)_g\operatorname{vol}_g.
\end{equation*}

\medskip

To any $k$-dimensional oriented, smooth submanifold $\Gamma$ of $M$ we can associate a $k$-current $\llbracket \Gamma\rrbracket\in D_k(M)$ by
\begin{equation*}
	\llbracket \Gamma\rrbracket(\alpha)\coloneqq \int_\Gamma\alpha=\int_\Gamma(\alpha(x),\vec \Gamma(x))_g \d\mathcal H_g^k(x).
\end{equation*}
Here, $\mathcal{H}^k_g$ is the $k$-dimensional Hausdorff measure on the manifold $(M,g)$ and $\vec \Gamma$ is the orientation of $\Gamma$, namely a continuous section $\Gamma\to \Lambda^k(\Gamma)$ such that $\vec \Gamma= \alpha_1\wedge\cdots\wedge \alpha_k$ on every $x\in\Gamma$, being $\alpha_1,\dots,\alpha_k$ an orthonormal basis of $T^*_x\Gamma$. 

\medskip

We say that a $k$-current $T$ is integer rectifiable if there exists a $\mathcal{H}_g^k$-measurable rectifiable set $\Gamma$, a locally $\mathcal{H}_g^k$-measurable positive integer-valued function $\theta$ and an $\mathcal{H}_g^k$-measurable map $\xi\colon \Gamma\to \Lambda^{k}(M)$ that can be represented as $\alpha_1\wedge\dots\wedge \alpha_k$, where $\{\alpha_1,\dots,\alpha_k\}$ form a basis for the approximate tangent space a.e.~ in $\Gamma$,   such that 
\begin{equation*}
	T(\alpha)=\int_\Gamma\theta(x)(\alpha(x),\xi(x))_g \d\mathcal H_g^k(x)
\end{equation*}
The notion of boundary of a current is obtained by enforcing Stokes' theorem. If $T\in D_{k}(M)$, we define $\partial T\in D_{k-1}(M)$ via the pairing 
\begin{equation*}
	\partial T(\alpha)\coloneqq T(\d \alpha)\quad\forall \alpha\in D^{k-1}(M).
\end{equation*}
A current $T$ is said to be integral if both $T$ and $\partial T$ are integer rectifiable.

\medskip

We will denote with $\Delta=\d^*\d+\d\d^*\colon D^k(M)\to D^k(M)$ the Hodge Laplacian\footnote{Note that this is the \emph{positive spectrum} Laplacian; on $0$-forms (i.e.~ functions) in $\R^n$ it is given by $\Delta=-\sum_{j=1}^n\frac{\partial^2}{\partial x_j^2}$.} on forms. The same operator can be defined on $D_k(M)$ by distributional pairing, i.e.~ $\Delta T(\alpha)\coloneqq T(\Delta\alpha)$. An element $\omega\in D_k(M)$ is said to be harmonic if $\Delta\omega=0$ or, equivalently, if 
\begin{equation*}
	\d\omega=0\quad\text{and}\quad \d^*\omega=0.
\end{equation*}
By Weyl's lemma, an harmonic $k$-current is also an harmonic $k$-form, and the vector subspace of harmonic forms is denoted with $\mathcal{H}^k_\Delta(M)\subset D^k(M)$. Finally, we recall that any $\alpha\in D^k(M)$ enjoys a Hodge decomposition: there exists $\varphi\in D^{k-1}(M)$, $\psi\in D^{k+1}(M)$ and $\omega\in \mathcal{H}^k_\Delta(M)$ such that
\begin{equation*}
	\alpha=\d\varphi+\d^*\psi+\omega
\end{equation*}
and the same is true for $W^{1,p}$ forms, see \cite{Schwarz1995}.

\medskip

We can define a \emph{fundamental solution} for the Hodge Laplacian.
For a $k$-form $\eta\in D^k(M)$ we want to find a solution to the equation
\begin{equation*}
	\Delta^k\omega=\eta\quad\text{on }M,
\end{equation*}
where we are denoting with $\Delta^k$ the Hodge Laplacian to highlight that it acts on $k$-forms.
Since $\Delta^k$ is a Fredholm operator with kernel $\mathcal{H}^k_\Delta$, standard Fredholm theory guarantees the existence of a solution to 
\begin{equation*}
	\Delta^k\omega=\eta^\perp \coloneqq \eta- \eta^\top
\end{equation*}
where $\eta^\top$ is the $L^2$ projection of $\eta$ onto $\mathcal{H}^k_\Delta$.
The solution is unique in $\mathcal{H}^k_\Delta(M)^\perp\coloneqq \{\omega\in D^k(M):\omega^\top=0\}$. Such solution has an integral representation via a ``fundamental form'': let $\Lambda^k(M)\boxtimes \Lambda^k(M)$ be the subspace of $\Lambda^{2k}(M\times M)$ of elements of the form
\begin{equation}\label{eq: boxtimes form}
	\sum_{I,J}f_{I,J}(x,y)\d x^I\wedge\d y^J,
\end{equation}
where the sum is taken over $I,J\in\mathcal{I}_k\coloneqq \{(i_1,\dots,i_k):1\leq i_0<\cdots<i_k\leq n\}$ and $\d x^I\coloneqq \d x^{i_1}\wedge\cdots\wedge \d x^{i_k}$. The fundamental solution or ``Green's operator'' for $\Delta^k$ is a (distributional) section $G^k_M\in D'(M\times M,\Lambda^k(M)\boxtimes \Lambda^k(M))$ characterised by the action 
\begin{equation}\label{eq: action Green form}
	\int_M\langle G_M^k(x,y),\Delta^k_y\alpha(y)\rangle_y dy=\alpha(x)
\end{equation}
for any $k$-form $\alpha\in \mathcal{H}^k_\Delta(M)^\perp$, see for instance \cite{Warner1983,Scott1995}. By representing $G_M^k$ locally in a small coordinate patch it's easy to see that it can be represented by \eqref{eq: boxtimes form} with $f_{I,J}\in L^1_{\text{loc}}$, since in $\R^n$ the Hodge Laplacian is nothing but the standard Laplacian on each component. Then, the action $\langle\cdot,\cdot\rangle_y$ means
\begin{equation*}
	\left\langle f_{I,J}(x,y)\d x^I\wedge\d y^J,\beta_K(y)\d y^K\right\rangle_y\coloneqq f_{I,J}(x,y)\beta_K(y)(\d y^J,\d y^K)_g\d x^I
\end{equation*} 
For a general $k$-form $\alpha$, the right-hand side of \eqref{eq: action Green form} is projected onto $\mathcal{H}^k_\Delta(M)^\perp$, namely
\begin{equation*}
	\int_M\langle G_M^k(x,y),\Delta^k_y\alpha(y)\rangle_y dy=\alpha^\perp(x).
\end{equation*}
In particular, for any $\beta\in \mathcal{H}^k_\Delta(M)^\perp$ there exists a unique $\alpha\in \mathcal{H}^k_\Delta(M)^\perp$ such that $\Delta^k\alpha=\beta$ and such $\alpha$ can be represented as
\begin{equation*}
	\alpha(x)=\int_M\langle G_M^k(x,y),\beta(y)\rangle_y \d y.
\end{equation*}
\subsection{The distributional Jacobian}
The notion we use to describe the singular set of a map $u\colon M\to \s^1$ is its distributional Jacobian  $Ju$. For $W^{1,1}(M,\s^1)$ maps, the real-valued 1-form $ju\coloneqq u^*(\d \theta)=(\d u,iu)=u^1\d u^2-u^2\d u^1$ belongs to $L^1$, where we denoted with $(\cdot,\cdot)$ the inner product in $\C$. Note that $ju$ is the projection of $\d u$ on its tangential component to $\s^1$ --- the only nonvanishing component since $(\d u,u)=\tfrac12\d|u|^2=0$ --- and thus
\begin{equation*}
	|\d u|=|ju|
\end{equation*}
pointwise a.e. We record the simple algebraic properties
\begin{equation}\label{eq: algebraic properties j}
	j(e^{i\varphi})=\d\varphi,\quad j(uv)=ju+jv,\quad j(u^{-1})=-ju,
\end{equation} 
valid in $W^{1,1}(M,\s^1)$. The distributional Jacobian $Ju$ is defined as the distributional exterior differential of $ju$, that is
\begin{equation}\label{eq: dist jac}
	\langle Ju,\zeta\rangle\coloneqq \int_M (ju,\d^*\zeta)_g
\end{equation}
for any smooth $2$-form $\zeta$. For regular maps, $Ju=2\d u^1\wedge \d u^2$ and by \eqref{eq: algebraic properties j} it follows that 
\begin{equation}\label{eq: algebraic properties J}
	 J(e^{i\varphi})=0,\quad J(uv)=Ju+Jv,\quad J(u^{-1})=-Ju.
\end{equation}
It is well known (see e.g.~ \cite{Brezis-Mironescu2021, Alberti-Baldo-Orlandi2003}) that the distributional Jacobian tracks the singularities of $u$ \emph{as currents}, meaning that $Ju$ carries not only set-theoretic informations about $\Sing(u)$, but also orientation and multiplicity. As an example, if $u(x)=(x/|x|)^k$ in $B_1\subset \R^2$, then 
\begin{equation*}
	Ju=(2\pi k\delta_0)\d x^1\wedge \d x^2.
\end{equation*}
\begin{definition}\label{def: singular set}
	We say that the singular set of an $\s^1$-valued map $u\in W^{1,1}$ is the current $T$ if $Ju=2\pi \star T$, where $\star$ is the Hodge star operator.
\end{definition}
Definition \ref{def: singular set} is motivated by the following fact. If $\Gamma$ is embedded and $u\in W^{1,1}(M,\s^1)\cap C(M\setminus \Gamma)$, we can define the degree $\deg(u,\Gamma)$ of $u$ around $\Gamma$ as the degree of the restriction of $u$ to any simple loop around $\Gamma$, oriented positively with the orientation of $\Gamma$\footnote{To be more precise, the normal bundle $N\Gamma$ has a natural orientation determined by the orientation of $\Gamma$. for any $x\in\Gamma$ any sufficiently small disc in $N_x\Gamma$ can be mapped in $M$ via the exponential map, without intersecting $\Gamma$ if not in $x$ itself. Then, $\deg(u,\Gamma)$ is just the degree of $u$ in the image of any small circle $\partial B_\eps\subset N_x\Gamma$ via the exponential map, and by continuity the choice is independent both on $x\in \Gamma$ and $\eps>0$.}. Then we have 
\begin{equation*}
	Ju=2\pi \deg(u,\Gamma)\star \llb\Gamma\rrb,
\end{equation*}
see for example \cite[Theorem 3.1]{Brezis-Mironescu2021}.

Using Definition \ref{def: singular set} we can define the singular set of a map in $W^{1,p}$, but we are also interested in doing that for $H^s$ maps for which the quantity \eqref{eq: dist jac} is not necessarily finite. However, for $s>1/2$ the distributional Jacobian can still be defined by means of the Factorisation theorem \cite[Theorem 7.1]{Brezis-Mironescu2021}: every map $u\in H^s$ can be written as $u=e^{i\phi}v$ for some $v\in W^{1,2s}$ and $\phi\in H^s$. Then, we define
\begin{equation*}
	\widetilde Ju\coloneqq Jv.
\end{equation*}
\begin{definition}\label{def: singular set Hs}
	We say that the singular set of an $\s^1$-valued map $u\in H^s$, $s\in (\tfrac12,1)$, is the current $T$ if $\widetilde Ju=2\pi \star T$.
\end{definition}
In the rest of this work, we will consider only currents that are admissible according to the following definition.
\begin{definition}\label{def: admissibility}
	We say that the current $T$ is admissible if $T=\partial S$, where $S$ is an integral $(n-1)$-current. In particular, $T$ is integer rectifiable. 
\end{definition}

\medskip

For the sake of readability, we will specialise to the case where the multiplicity of $T$ is one, since very little modifications are needed to include higher multiplicity. In this case, we suppose that $T$ is supported on a rectifiable set $\Gamma$ with orientation $\vec\Gamma$ and we denote the current $T\coloneqq\llbracket\Gamma\rrbracket$.

\begin{remark}\label{rem: ABO}
	The interpretation of the Jacobian as the singular set of a Sobolev $\s^1$-valued map was extensively studied by Alberti, Baldo and Orlandi in the beautiful paper \cite{Alberti-Baldo-Orlandi2003}. As they remark, this interpretation must be handled with care; for instance, a vanishing distributional Jacobian does \emph{not} imply regularity of the function --- a point singularity in $\R^3$ will be completely ignored by $J$. What is true is that the Jacobian tracks the part of the singular set which is \emph{topologically relevant}.
\end{remark}

We conclude this section with a simple remark related to the harmonicity of $u$.
\begin{remark}
	The codifferential of $ju$ measures the harmonicity of $u$, in the following sense: for any smooth map $u\colon M\to \s^1$
\begin{equation}\label{eq: codiff ju}
	\d^*ju=\d^*(\d u,iu)=(\d^*\d u,iu)+(\d u,i\d u)=-(\Delta_Mu,iu);
\end{equation}
that is, any map $u\colon M\to \s^1$ is harmonic if and only if $\d^*ju=0$. Moreover, $u\in W^{1,2}(M,\s^1)$ is weakly harmonic if and only if $\d^*ju=0$ weakly. Indeed, consider an outer variation of the form 
\begin{equation*}
	u_t=\frac{u+tv}{|u+tv|}
\end{equation*}
for some smooth perturbation $v$. A direct expansion shows that $u_t=u+t\hat v\,iu+\O(t^2)$, where $\hat v=(v,iu)$. Then 
\begin{align*}
	\frac{d}{dt}\Big\vert_{t=0}\int_M|\d u_t|^2&=\frac{d}{dt}\Big\vert_{t=0}\int_M|j u_t|^2\\
	&=\int_M ju\cdot \d\hat v.
\end{align*}
\end{remark}

\section{The class $\mathscr{H}(M,\llb\Gamma\rrb)$ and the energy expansion}\label{sec: HM}

Given a singular map $u\in W^{1,p}(M,\s^1)$, for some $p\in(1,2)$, we can decompose the real-valued one form $ju$ as
\begin{equation}\label{eq: Hodge decomposition}
	ju=\d\phi+\d^*\psi+\omega
\end{equation}
where $\omega$ is harmonic and the expression holds in the sense of distributions. Expression \eqref{eq: Hodge decomposition} is particularly meaningful because it highlights how the harmonicity of $u$ and its singular set are, in a way, independent. Indeed, taking the codifferential of \eqref{eq: Hodge decomposition} and using \eqref{eq: codiff ju}  we find 
\begin{equation*}
	(-\Delta_Mu,iu)=\d^*ju=\d^*\d \phi=-\Delta_M\phi.
\end{equation*}
In other words, $u$ is harmonic to $\s^1$ if and only if $\phi$ is an harmonic function. On the other hand, prescribing a singular set $\llbracket\Gamma\rrbracket$ in the sense of Definition \ref{def: singular set} reads
\begin{equation}\label{eq: prescr. sing. set}
	2\pi\star\llbracket\Gamma\rrbracket=Ju=\d ju=\d\d^*\psi.
\end{equation}
We now show that \eqref{eq: prescr. sing. set} is solvable for every admissible singular set $\llbracket\Gamma\rrbracket$.

\begin{lemma}\label{lem: solvability psi}
	Let $\llbracket\Gamma\rrbracket$ be admissible in the sense of Definition \ref{def: admissibility}. Then, there exists a unique $\psi=\psi(\llbracket\Gamma\rrbracket)\in D_{2}(M)\cap \mathcal{H}^{2}_\Delta(M)^\perp$ such that \eqref{eq: prescr. sing. set} holds. 
	\begin{proof}
		As a consequence of admissibility we find that $\llbracket\Gamma\rrbracket=\llbracket\partial \Sigma\rrbracket=\d^*\llbracket\Sigma\rrbracket$, namely that $\llb\Gamma\rrb$ is distributionally a coboundary and reveals that the projection of $\llb\Gamma\rrb$ onto the space of harmonic $(n-2)$-forms vanishes. Thus, defining 
\begin{align*}
	A(x)\coloneqq 2\pi\int_{M}\langle G_M^{n-2}(x,\cdot),\llbracket\Gamma\rrbracket\rangle= 2\pi\int_{\Gamma}\langle G_M^{n-2}(x,y),\vec\Gamma(y)\rangle_y \d\mathcal{H}^{n-2}_g(y)
\end{align*}
we find $\Delta A=2\pi\llbracket\Gamma\rrbracket$. Note that $A\in D_{n-2}(M)\cap \mathcal{H}^{n-2}_\Delta(M)^\perp$. We observe that, distributionally
\begin{equation}\label{eq: d*A harmonic}
	\Delta \d^*A=\d^*\d\d^*A=\d^*\Delta A=2\pi \d^*\llbracket \Gamma\rrbracket=0
\end{equation}
since $\d^*\llbracket \Gamma\rrbracket=\llbracket \partial \Gamma\rrbracket$=0; thus $\d^*A\in D_{n-3}(M)$ is distributionally harmonic, which implies in particular that
\begin{equation}\label{eq: dd* on w}
	\d\d^*A=0.
\end{equation}
Set $\psi=\star A\in D_2(M)\cap \mathcal{H}^{2}_\Delta(M)^\perp$. By \eqref{eq: dd* on w}, we have $\d^*\d\psi=0$
and this implies that 
\begin{equation*}\label{eq: d*dpsi}
	\d\d^*\psi=\Delta\psi=\star \Delta A=2\pi\star \llbracket\Gamma\rrbracket
\end{equation*}
which concludes the proof.
	\end{proof}
\end{lemma}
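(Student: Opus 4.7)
The plan is to solve the equation $\d\d^*\psi = 2\pi \star \llbracket\Gamma\rrbracket$ by first solving a full Hodge Laplacian equation and then showing that the extra piece automatically vanishes, thanks to admissibility. The dualisation via the Hodge star lets us switch from looking for a $2$-form to looking for an $(n-2)$-form $A$ with $\Delta A = 2\pi \llbracket\Gamma\rrbracket$, which is the natural candidate if one inverts the Hodge Laplacian componentwise.

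First I would use admissibility to check that $\llbracket\Gamma\rrbracket$ lies in $\mathcal{H}^{n-2}_\Delta(M)^\perp$, so that Fredholm theory applies: writing $\llbracket\Gamma\rrbracket = \partial\llbracket\Sigma\rrbracket$ and translating into forms via Stokes gives $\llbracket\Gamma\rrbracket = \d^*\llbracket\Sigma\rrbracket$ distributionally, which is $L^2$-orthogonal to every closed (in particular harmonic) $(n-2)$-form. Then I would invoke the Green's operator $G^{n-2}_M$ and define the distributional $(n-2)$-current
\begin{equation*}
    A(x) \coloneqq 2\pi \int_M \langle G^{n-2}_M(x,y), \llbracket\Gamma\rrbracket(y)\rangle_y \d y,
\end{equation*}
so that $A \in \mathcal{H}^{n-2}_\Delta(M)^\perp$ and $\Delta A = 2\pi \llbracket\Gamma\rrbracket$ in the sense of currents.

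The heart of the argument, and probably the only genuinely nontrivial point, is to show that the Hodge decomposition of $A$ has no $\d\varphi$ part — equivalently, that $\d\d^*A = 0$. For this I would argue that $\d^*A$ is itself harmonic: applying $\d^*$ to both sides of $\Delta A = 2\pi \llbracket\Gamma\rrbracket$ gives $\Delta \d^*A = 2\pi\, \d^*\llbracket\Gamma\rrbracket$, and the right-hand side vanishes because $\d^*\llbracket\Gamma\rrbracket$ corresponds (via Stokes) to $\partial\llbracket\Gamma\rrbracket = \partial\partial\llbracket\Sigma\rrbracket = 0$. Hence $\d^*A$ is distributionally harmonic, and in particular $\d\d^*A = 0$. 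This is where admissibility is used in an essential way, beyond mere solvability.

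Once this is in place, setting $\psi \coloneqq \star A \in D_2(M)\cap \mathcal{H}^2_\Delta(M)^\perp$ finishes the existence: since $\d$ and $\d^*$ are exchanged by $\star$ (up to sign) on the appropriate degrees, $\d\d^* A = 0$ becomes $\d^*\d\psi = 0$, whence $\Delta\psi = \d\d^*\psi$, and applying $\star$ to $\Delta A = 2\pi\llbracket\Gamma\rrbracket$ yields $\d\d^*\psi = 2\pi \star\llbracket\Gamma\rrbracket$, as required. Uniqueness in $\mathcal{H}^2_\Delta(M)^\perp$ is immediate: if $\psi_1,\psi_2$ are two solutions, their difference satisfies $\d\d^*(\psi_1-\psi_2)=0$, so pairing with $\psi_1-\psi_2$ in $L^2$ gives $\d^*(\psi_1-\psi_2)=0$ and hence $\Delta(\psi_1-\psi_2)=0$, forcing $\psi_1=\psi_2$ by the orthogonality constraint. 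The only delicate step to verify carefully is the distributional computation $\d^*\llbracket\Gamma\rrbracket = 0$ combined with Weyl's lemma on the harmonic current $\d^*A$, which is what lets one conclude pointwise (or distributional) vanishing of $\d\d^*A$.
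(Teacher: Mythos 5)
Your existence argument is essentially the paper's: the same potential $A$ defined through the Green's operator $G^{n-2}_M$ (with admissibility ensuring the harmonic projection of $\llbracket\Gamma\rrbracket$ vanishes), the same observation that $\Delta \d^*A=2\pi\,\d^*\llbracket\Gamma\rrbracket=0$ because $\partial\llbracket\Gamma\rrbracket=\partial\partial\llbracket\Sigma\rrbracket=0$, hence $\d\d^*A=0$, and the same conclusion via $\psi=\star A$. On this part there is nothing to add.

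The place where you go beyond the paper, the uniqueness argument, has a gap. Setting $\eta=\psi_1-\psi_2$, the relation $\d\d^*\eta=0$ paired against $\eta$ yields only $\d^*\eta=0$ (and even that pairing needs more regularity than ``$\eta$ is a current''); it does not give $\d\eta=0$, so you cannot pass to $\Delta\eta=0$ and then invoke $\eta\in\mathcal{H}^{2}_\Delta(M)^\perp$. In fact, equation \eqref{eq: prescr. sing. set} alone does not determine $\psi$ within $\mathcal{H}^{2}_\Delta(M)^\perp$: adding any co-exact form $\d^*\beta$ with $\beta\in D^{3}(M)$ changes neither $\d\d^*\psi$ nor the harmonic projection. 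What your pairing argument does show is that $\d^*\psi$ is uniquely determined, which is the quantity actually used later (in $ju=\d^*\psi+\omega$); uniqueness of $\psi$ itself requires the stronger normalisation built into the construction, namely $\d^*\d\psi=0$, equivalently $\Delta\psi=2\pi\star\llbracket\Gamma\rrbracket$, for which Fredholm theory in the orthogonal complement of the harmonic $2$-forms gives a unique solution. (For comparison, the paper's own proof only addresses existence, so this is a point worth making precise rather than a defect relative to it.)
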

Next, we define and characterise the set of maps we are interested in.
\begin{definition}
	We denote with $\mathscr{H}(M,\llb\Gamma\rrb)$ the set of $\s^1$-valued maps on $M$ which are harmonic on $M\setminus\Gamma$, distributionally harmonic on $M$ and with singular set $\llb\Gamma\rrb$ in the sense of Definition \ref{def: singular set}, modulo constant phases.
\end{definition}
\begin{lemma}
	A map $u\in W^{1,p}(M,\s^1)$, with $p\in(1,2)$, belongs to $\mathscr{H}(M,\llb\Gamma\rrb)$ if and only if 
	\begin{equation}\label{eq: can harm map}
		ju=\d^*\psi+\omega
	\end{equation}
	for some harmonic one form $\omega$ and $\psi=\psi(\llb\Gamma\rrb)$ as in Lemma \ref{lem: solvability psi}
	\begin{proof}
		If $u\in \mathscr{H}(M,\llb\Gamma\rrb)$, then we can decompose $ju$ as\
		\begin{equation*}
			ju=\d\phi+\d^*\psi+\omega
		\end{equation*} 
		and observe that the decomposition is well defined in the equivalence class modulo constant phases, since changing representative amounts to replacing $\phi$ by $\phi+c$ for some constant $c\in\R$.
		Equation \eqref{eq: prescr. sing. set} and Lemma \ref{lem: solvability psi} imply that $\d^*\psi=\d^*\psi(\llb\Gamma\rrb)$, while global distributional harmonicity and \eqref{eq: codiff ju} implies that $\phi$ is distributionally harmonic, thus constant, so \eqref{eq: can harm map} holds. On the other hand, if \eqref{eq: can harm map} holds then the condition on the singular set and harmonicity away from the singular set are satisfied. Moreover, global distributional harmonicity holds if $\int_M(ju,\d\zeta)_g=0$ for every smooth function $\zeta\in D^0(M)$, which is clearly true.
	\end{proof}
\end{lemma}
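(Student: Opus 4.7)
The plan is to verify both implications directly, using the Hodge decomposition of $ju$ together with the uniqueness assertion of Lemma \ref{lem: solvability psi}, essentially carrying out the sketch already suggested in the statement.

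For the forward implication, starting from $u \in \mathscr{H}(M,\llb\Gamma\rrb)$, I would decompose the $L^p$ one-form $ju$ as
\begin{equation*}
    ju = \d\phi + \d^*\psi + \omega,
\end{equation*}
normalising $\phi \in \mathcal{H}^0_\Delta(M)^\perp$ and $\psi \in \mathcal{H}^2_\Delta(M)^\perp$ so that the three summands are uniquely determined. Taking $\d$ distributionally, the identities $\d\circ\d = 0$ and $\d\omega = 0$ give $Ju = \d\d^*\psi$; the singular set hypothesis forces $\d\d^*\psi = 2\pi\star\llb\Gamma\rrb$, and the uniqueness in Lemma \ref{lem: solvability psi} identifies $\psi = \psi(\llb\Gamma\rrb)$. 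For the exact term, global distributional harmonicity of $u$ combined with \eqref{eq: codiff ju} gives $\d^*ju = 0$ distributionally; since $\d^*\d^*\psi = 0$ (by duality against $\d\d = 0$) and $\d^*\omega = 0$ (by harmonicity), this reduces to $\Delta\phi = \d^*\d\phi = 0$. Weyl's lemma then upgrades $\phi$ to a smooth harmonic function on the closed manifold $M$, hence a constant, which the normalisation forces to vanish, so \eqref{eq: can harm map} follows.

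For the converse, starting from \eqref{eq: can harm map}, I would verify the three defining properties of $\mathscr{H}(M,\llb\Gamma\rrb)$ in turn. The singular set condition is immediate: $Ju = \d ju = \d\d^*\psi = 2\pi\star\llb\Gamma\rrb$ by Lemma \ref{lem: solvability psi}. Away from $\Gamma$, elliptic regularity applied to $\Delta\psi = 2\pi\star\llb\Gamma\rrb$ makes $\psi$ smooth on $M\setminus\Gamma$, so $\d^*ju = \d^*\d^*\psi + \d^*\omega$ vanishes pointwise there; combined with \eqref{eq: codiff ju}, this yields classical harmonicity of $u$ on $M\setminus\Gamma$. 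Finally, distributional harmonicity on $M$ amounts to
\begin{equation*}
    \int_M (ju,\d\zeta)_g = \int_M (\d^*\psi+\omega,\d\zeta)_g = 0
\end{equation*}
for every $\zeta \in D^0(M)$, which follows by pairing to $\langle\d^*\d^*\psi + \d^*\omega,\zeta\rangle = 0$.

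I do not anticipate a substantive obstacle here: the lemma is essentially a structural repackaging of Lemma \ref{lem: solvability psi}. The only point deserving care is that $\psi$ is only a low-regularity $2$-form, so every manipulation involving $\d^*\d^* = 0$ or $\d\circ\d = 0$ must be read as a duality identity against smooth test forms --- which is precisely what is available, since $\omega$ and all test objects are smooth.
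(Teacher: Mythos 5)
Your proof is correct and follows essentially the same route as the paper: Hodge-decompose $ju$, identify the coexact part with $\d^*\psi(\llb\Gamma\rrb)$ via the Jacobian condition and Lemma \ref{lem: solvability psi}, use distributional harmonicity plus \eqref{eq: codiff ju} to force the exact part to vanish, and check the converse by testing against exact forms. The only (harmless) imprecision is asserting $\psi=\psi(\llb\Gamma\rrb)$ rather than $\d^*\psi=\d^*\psi(\llb\Gamma\rrb)$, since only the coexact part $\d^*\psi$ is pinned down by the decomposition --- which is all the lemma requires.
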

\begin{remark}
	Without imposing the condition of distributional harmonicity on $M$, in principle the function $\phi$ in the Hodge decomposition could be any harmonic function in $M\setminus\Gamma$, potentially very singular on $\Gamma$, see for instance \cite{Bethuel-Brezis-Helein1994}.
\end{remark}

Next, we state and prove our first main result.
\begin{theorem}\label{thm: harmonic maps}
	Let $\llbracket\Gamma\rrbracket$ be admissible in the sense of Definition \ref{def: admissibility} and let $\psi=\psi(\llb\Gamma\rrb)$ be the map arising from Lemma \ref{lem: solvability psi}. Let $b_1(M)$ be the first Betti number, that is the rank of the first integral cohomology group $H^1(M,2\pi\Z)$. Then 
	\begin{enumerate}
		\item if $b_1(M)=0$ then $\mathscr{H}(M,\llb\Gamma\rrb)=\{u_\circ\}$ is a singleton.
		\item if $b_1(M)\ne 0$ there are multiple harmonic maps $\mathscr{H}(M,\llb\Gamma\rrb)=\{u_\alpha:\alpha\in H^1(M,2\pi\Z)\}$, satisfying 
		\begin{equation}\label{eq: equalpha th1}
			ju_\alpha=\d^*\psi+\omega_\alpha
		\end{equation} 
		where $\omega_\alpha= [\d^*\psi]+\alpha$, with $\alpha\in H^1(M,2\pi\Z)$. 
	\end{enumerate}
	\begin{proof}
	We start by observing that, for any map $u\colon M\to\s^1$, $ju=u^*(\d\theta)\in H^1(M,2\pi\Z)$ since $\d\theta\in H^1(\s^1,2\pi\Z)$, so the right-hand side of \eqref{eq: can harm map} must be integral (here and in what follows, when we say ``integral'' we always mean up to a factor of $2\pi$). 
	
	\medskip
	
		If $b_1(M)=0$, then we claim that $[\d^*\psi]$ belongs to the integral cohomology of $M\setminus\Gamma$, as a consequence of integrality of $\llb\Gamma\rrb$. Recall that the relative cohomology groups $H^k(M,M\setminus\Gamma;G)$, where $G$ is a given group, fit into a long exact sequence 
		\begin{equation}\label{eq: les}
		\begin{tikzcd}
		\cdots \arrow[r] & H^k(M,M\setminus\Gamma;G) \arrow[r, "j^*"]  & H^k(M;G) \arrow[r, "i^*"] & H^k(M\setminus\Gamma;G) \ar[out=-20, in=160, "\d"]{dll} \\ & H^{k+1}(M,M\setminus\Gamma;G) \arrow[r, "j^*"]  & H^{k+1}(M;G) \arrow[r, "i^*"] & H^{k+1}(M\setminus\Gamma;G) \rar[r] & \cdots 
		\end{tikzcd}
		\end{equation}
		where $i$ and $j$ are respectively the inclusion and the quotient map,
		see \cite[\S3.1]{Hatcher2002}. We consider the two sequences arising by choosing the integral $(G=2\pi\Z)$ and de Rham $(G=\R)$ cohomologies
		\begin{equation*}
		\begin{tikzcd}
		0 \arrow[r] & H^{1}(M\setminus\Gamma;2\pi\Z) \arrow[r, "\d"] \arrow[d] & H^{2}(M,M\setminus\Gamma;2\pi\Z) \arrow[r, "j^*"] \arrow[d] & H^{2}(M;2\pi\Z) \arrow[d] \\
		0 \arrow[r] & H^{1}(M\setminus\Gamma;\R) \arrow[r, "\d"] & H^{2}(M,M\setminus\Gamma;\R) \arrow[r, "j^*"] & H^{2}(M;\R)
\end{tikzcd}
		\end{equation*}
		where we used that $H^{1}(M,\R)=0$. Here the vertical arrows are the canonical maps $H^k(A,2\pi\Z)\to H^k(A,\R)$.
		The claim follows by diagram chasing; note first that $2\pi \star\llb\Gamma\rrb\in H^{2}(M,M\setminus\Gamma,2\pi\Z)$ is mapped to $0$ in $H^{2}(M;2\pi\Z)$, since $\Gamma$ is a boundary. By exactness of the sequence above, there is $\omega\in H^{1}(M\setminus\Gamma,2\pi\Z)$ such that $\d\omega=2\pi \star\llb\Gamma\rrb$. The same holds when mapped into de Rham cohomologies, but there we know that $\d\d^*\psi=2\pi \star\llb\Gamma\rrb$ as well (observe that, to make sense of this the sequence below needs to be taken in the distributional sense). By injectivity of $\d$, which follows by $H^{1}(M,\R)=0$, we get $\omega=\d^*\psi$, i.e.~ $\d^*\psi\in H^{1}(M\setminus\Gamma;2\pi\Z)$.
		
		\medskip

		Now, we set
		\begin{equation*}
			u_\circ(x)\coloneqq \exp\left(i\int_{\gamma(x_\circ,x)}\d^*\psi\right)
		\end{equation*}
		where $\gamma(x_\circ,x)$ is any curve in $M\setminus\Gamma$ connecting two points $x_\circ,x\in M\setminus\Gamma$. The integrality condition ensures that $u_\circ$ is independent on the choice of $\gamma$ and
		\begin{equation*}
			ju_\circ=(\d u_\circ,iu_\circ)=\d^*\psi.
		\end{equation*} 
		Let now $v$ be another map in $\mathscr{H}(M,\llb\Gamma\rrb)$. Then $j(u_\circ v^{-1})=ju_\circ-jv=\d^*\psi-\d^*\psi=0$ since there is no harmonic part. This implies that $u_\circ v^{-1}$ is a constant phase, so the two maps are equal in $\mathscr{H}(M,\llb\Gamma\rrb)$.
		
		\medskip
		
		If $b_1(M)\ne 0$ the proof is similar, with the difference that $[\d^*\psi]$ is in general not integral. Choosing $\omega_\alpha= [\d^*\psi]+\alpha$ with $\alpha\in H^1(M,2\pi\Z)$ is equivalent to saying that $[\d^*\psi+\omega_\alpha]\in H^1(M,2\pi\Z)$, so the map 
		\begin{equation*}
			u_\alpha(x)\coloneqq \exp\left(i\int_{\gamma(x_\circ,x)}\d^*\psi+\omega_\alpha\right)
		\end{equation*}
		is well defined and satisfies \eqref{eq: equalpha th1}. Let now $v$ be any other map in $\mathscr{H}(M,\llb\Gamma\rrb)$ with decomposition $jv=\d^*\psi+\omega_v$. Then 
		\begin{equation*}
			j(u_\alpha v^{-1})=\omega_\alpha-\omega_v
		\end{equation*} 
		which implies that $\omega_\alpha-\omega_v$ must be integral. In particular, 
		\begin{equation*}
			\omega_v\in \omega_\alpha+H^1(M,2\pi\Z)=[\d^*\psi]+H^1(M,2\pi\Z)
		\end{equation*}
		which concludes the proof.
	\end{proof}
\end{theorem}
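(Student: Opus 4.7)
My plan is to reduce the problem, via the lemma preceding the theorem, to the purely cohomological question of which harmonic $1$-forms $\omega \in \mathcal{H}^1_\Delta(M)$ make $\d^*\psi + \omega$ have integer periods on $M \setminus \Gamma$, and then to resolve this by a diagram chase on the long exact sequence of the pair $(M, M \setminus \Gamma)$. The reduction works because a representative $u\in\mathscr{H}(M,\llb\Gamma\rrb)$ satisfies $ju = \d^*\psi + \omega$ for some such $\omega$, while any closed $1$-form $\eta$ on $M \setminus \Gamma$ admits a smooth circle-valued primitive $u(x) = \exp(i\int_{x_0}^x \eta)$, unique up to a constant phase, exactly when $[\eta] \in H^1(M \setminus \Gamma; 2\pi\Z)$, since $ju = u^*(\d\theta)$ always has integer periods.

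The key computational tool will be the long exact sequence of the pair $(M, M \setminus \Gamma)$, written with both $2\pi\Z$ and $\R$ coefficients and compared by the canonical change-of-coefficients maps. Admissibility $\Gamma = \partial\Sigma$ forces the class $2\pi[\star\llb\Gamma\rrb] \in H^2(M, M\setminus\Gamma; 2\pi\Z)$ to have trivial image in $H^2(M; 2\pi\Z)$, so by exactness it arises as the image under the connecting map $\d$ of some integer class $\omega_0 \in H^1(M\setminus\Gamma; 2\pi\Z)$. Over $\R$, the same class is hit by the de Rham class of $\d^*\psi$ restricted to $M\setminus\Gamma$, because $\d\d^*\psi = 2\pi\star\llb\Gamma\rrb$ holds distributionally and $\d^*\psi$ is smooth and closed away from $\Gamma$. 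The two lifts therefore differ by an element of $\ker\d$ over $\R$, which by exactness equals the image of the restriction $H^1(M; \R) \to H^1(M\setminus\Gamma; \R)$.

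When $b_1(M) = 0$ this kernel is trivial, so $[\d^*\psi]$ is already integer, the map $u_\circ(x) = \exp(i\int_\gamma \d^*\psi)$ is well-defined independently of the path $\gamma \subset M\setminus\Gamma$, and for any other $v \in \mathscr{H}(M,\llb\Gamma\rrb)$ the form $j(vu_\circ^{-1})$ is a harmonic $1$-form on $M$ with integer periods, which must vanish, yielding uniqueness up to constant phase. When $b_1(M) \neq 0$, running the analogous diagram chase entirely at the $2\pi\Z$-level identifies the set of admissible $\omega$ with the torsor $[\d^*\psi] + H^1(M; 2\pi\Z)$, producing the family $\omega_\alpha = [\d^*\psi] + \alpha$ and the corresponding $u_\alpha$; bijectivity of the parametrisation follows by reading off the harmonic part of $j(u_\alpha u_\beta^{-1}) = \alpha - \beta$.

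The hardest step to justify carefully will be the compatibility between the analytic construction of $\psi$ through the Green's operator and the topological connecting homomorphism — namely, that the distributional identity $\d\d^*\psi = 2\pi\star\llb\Gamma\rrb$ translates to the cohomological assertion that the de Rham class $[\d^*\psi|_{M\setminus\Gamma}]$ is sent to $2\pi[\star\llb\Gamma\rrb] \in H^2(M, M\setminus\Gamma; \R)$ by the connecting map of the sequence. Once this bridge between analysis and topology is in place, both cases of the theorem follow mechanically from the exactness statements above.
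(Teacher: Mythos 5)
Your proposal is correct and follows essentially the same route as the paper: the reduction to integrality of the periods of $\d^*\psi+\omega$ on $M\setminus\Gamma$ via the characterisation $ju=\d^*\psi+\omega$, the comparison of the long exact sequences of the pair $(M,M\setminus\Gamma)$ with $2\pi\Z$ and $\R$ coefficients using admissibility of $\llb\Gamma\rrb$, the explicit construction $u_\alpha=\exp\bigl(i\int_\gamma \d^*\psi+\omega_\alpha\bigr)$, and uniqueness by inspecting $j(u_\alpha v^{-1})$. The compatibility you flag as the hardest step --- matching the distributional identity $\d\d^*\psi=2\pi\star\llb\Gamma\rrb$ with the connecting homomorphism --- is exactly the point the paper itself treats only briefly (interpreting the de Rham sequence distributionally), so your plan matches the paper's proof in both structure and level of detail.
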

\begin{definition}
	We define the desingularised energy of the harmonic map $v\in \mathscr{H}(M,\llb\Gamma\rrb)$ in the following way: if $jv$ decomposes as $jv=\d^*\psi+\omega$, then we set 
	\begin{equation*}
		e(v)\coloneqq \|\omega\|_{L^2(M)}^2.
	\end{equation*}
\end{definition}
\begin{remark}\label{rem: canonical}
	Theorem \ref{thm: harmonic maps} is sharp in the sense that in general, when $b_1(M)\ne 0$, $[\d^*\psi]$ is actually not integral and the lattice defined by 
	\begin{equation}\label{eq: lattice}
		[\d^*\psi]+H^1(M,2\pi\Z)
	\end{equation}
	is affine. 
	This was proved in a case as simple as the flat torus in \cite[Example 6.7]{Ignat-Jerrard2021}. The natural question about the possibility of choosing a \emph{canonical} singular harmonic map $u\in\mathscr{H}(M,\llb\Gamma\rrb)$ has a negative answer in general, since even the least energetic point (in the sense of the desingularised energy $e$) of the lattice \eqref{eq: lattice} might not be unique (see the same example in \cite{Ignat-Jerrard2021}).  
	\end{remark}
Before moving to the next section, we present a few properties of $\mathscr{H}(M,\llb\Gamma\rrb)$. Recall that if $\Gamma$ is embedded and $C^{1,1}$, we can parametrise a tubular neighbourhood via Fermi coordinates: there exists $\delta_\circ>0$ sufficiently small such that for any $\delta\in(0,\delta_\circ]$ the map
\begin{align*}
	X&\colon \Gamma\times B_\delta\to T_\delta\\
	&\ \ (y,z)\to \exp_y(z^1\nu_1(y)+z^2\nu_2(y))
\end{align*}
defines a diffeomorphism (see \cite{Gray2004}). Here $B_\delta$ denotes the ball of radius $\delta$ in $\R^2$ and $\{\nu_1,\nu_2\}$ is a global normal frame for $\Gamma$, oriented positively with $\vec\Gamma$.
\begin{lemma}\label{lem: propertis Hscr}
	The following properties of singular harmonic maps hold:
	\begin{enumerate}
		\item If $\llb\Gamma_1\rrb$ and $\llb\Gamma_2\rrb$ are admissible currents and $u_i\in \mathscr{H}(M,\llb\Gamma_i\rrb)$ for $i=1,2$, then $u_1u_2\in\mathscr{H}(M,\llb\Gamma_1\rrb+\llb\Gamma_2\rrb)$.
		\item If $u_1,\dots,u_m\in \mathscr{H}(M,\llb\Gamma\rrb)$, then $u_1\cdots u_m\in \mathscr{H}(M,m\llb\Gamma\rrb)$ where $m\llb\Gamma\rrb$ is counted with multiplicity $m$.
		\item If $u\in \mathscr{H}(M,\llb\Gamma\rrb)$, then $u^{-1}\in \mathscr{H}(M,-\llb\Gamma\rrb)$, where $-\llb\Gamma\rrb$ denotes the same current as $\llb\Gamma\rrb$ but with opposite orientation.
		\item If $u_\circ\in \mathscr{H}(M,\llb\Gamma\rrb)$ and $\Gamma$ is a $C^{1,1}$, embedded submanifold, then there exists a function $\lambda\in W^{1,q}(M,\R)$ for every $q\in(1,+\infty)$ such that
		\begin{equation*}
			u\vert_{T_\delta}=e^{i\lambda(y,z)}\frac{z}{|z|},
		\end{equation*}
		where $(y,z)$ are Fermi coordinates around $\Gamma$.
	\end{enumerate}
	\begin{proof}
		Properties $(1)$---$(3)$ follow directly from \eqref{eq: can harm map} and the group structure of $H^1(M,2\pi\Z)$. To show property $(4)$, we use that by the degree +1 condition and the admissibility condition there exists a map $u_*\colon M\to \s^1$, smooth outside of $\Gamma$ and such that $u_*\vert_{T_\delta}=z/|z|$ for any choice of normal frame in Fermi coordinates; see \cite[Appendix A]{Badran-DelPino2024}. In particular, the Hodge decomposition of $ju_*$ reads $ju_*=\d\lambda+\d^*\psi+\omega$, where $\psi=\psi(\llb\Gamma\rrb)$ and $\omega\in [\d^*\psi]+H^1(M,2\pi\Z)$. Multiplying $u_*$ with elements of the form $\exp(i\int_{\gamma(x_\circ,x)}\alpha)$, where $\alpha\in H^1(M,2\pi\Z)$, we can assume that $\omega=\omega_\circ$ where $\omega_\circ\in[\d^*\psi]+ H^1(M,2\pi\Z)$ is the only element for which $ju_\circ=\d^*\psi+\omega_\circ$. It remains to prove that, for every $q\in(1,+\infty)$, $\lambda$ is $W^{1,q}$ near $\Gamma$. Observe that $\lambda$ solves
		\begin{equation*}
			-\Delta_M\lambda=\d^*ju_*.
		\end{equation*}
		Near $\Gamma$,  $ju_*(y,z)=|z|^{-2}(-z_2\d z_1+z_1\d z_2)$ in Fermi coordinates and
		\begin{equation}\label{eq: expansion djustar}
			\d^*ju_*(y,z)=-H^1_\Gamma(y)\frac{z_2}{|z|^2}+ H^2_\Gamma(y)\frac{z_1}{|z|^2}  +V(y,z)
		\end{equation}
		where $V$ is bounded and $H_\Gamma^i=\langle H_\Gamma,\nu_i\rangle$, with $H_\Gamma$ being the mean curvature vector of $\Gamma$. Indeed, this is a direct computation using that $\d^*\omega=-\frac{1}{\sqrt{g}}\partial_i(\sqrt{g}g^{ij}\omega_j)$ and that the metric in Fermi coordinates is given by 
		\begin{equation}\label{eq: Fermi metric}
			g=\begin{pmatrix}
				g_{z}&\O(|z|^2)\\\O(|z|^2)&I_2+\O(|z|^2)
			\end{pmatrix}
		\end{equation}
		where $g_z$ is a two-parameter family of $C^{1,1}$ metrics on $\Gamma$ and $I_2$ is the $2\times 2$ identity matrix. Note that, up to very small terms the metric \eqref{eq: Fermi metric} is diagonal near $\Gamma$, which means that we can approximate locally $T_\delta$ with $\Gamma\times B_\delta$, with a quadratically small error in $|z|$. Regularity properties for the Laplace--Beltrami operator $\Delta_{T_\delta}$ on the tubular neighbourhood then follow from regularity properties of the product operator $\Delta_{\Gamma\times B_\delta}=\Delta_\Gamma+\Delta_z$ via a perturbative approach \emph{\`a la} Schauder, see \cite[\S2.3]{FernandezReal-RosOton}. The advantage of working in a tube $\Gamma\times B_\delta$ resides in the possibility of ``splitting'' the regularities along and across the submanifold $\Gamma$, in the following way: the right-hand side $\d^*ju_*$ belongs to the anisotropic Sobolev space $L^q_yL^p_z(\Gamma\times B_\delta)$, defined by the norm 
		\begin{equation*}
			\|f\|_{L^q_yL^p_z(\Gamma\times B_\delta)}=\left(\int_{\Gamma}\left(\int_{B_\delta}|f|^p\right)^{q/p}\right)^{1/q}
		\end{equation*}
		for every $p\in(1,2)$ and $q\in(1,+\infty)$. Anisotropic Sobolev spaces enjoy a Calder\'on--Zygmund theory \cite{Stefanov-Torres2004}, which guarantees that $\lambda\in L^q_yW^{2,p}_z(\Gamma\times B_\delta)$. In turn, since the $W^{2,p}$ norm acts on functions defined on 2-dimensional domains, by Sobolev embedding
		\begin{equation*}
			L^q_yW^{2,p}_z(\Gamma\times B_\delta)\subset L^q_yW^{1,q}_z(\Gamma\times B_\delta).
		\end{equation*} 
		More precisely, this means that 
		\begin{equation}\label{eq: LqW1qz}
			\left(\int_{\Gamma\times B}|D_z\lambda|^q+|\lambda|^q\right)^{1/q}<\infty.
		\end{equation}
		Using the fact that the right-hand side is Lipschitz in $y$, we can differentiate the whole equation in $y$ to obtain $-\Delta(D_y\lambda)=D_yf\in L^q_yL^p_z(\Gamma\times B_\delta)$. Using again Calder\'on--Zygmund and Sobolev embedding, we find that $D_y\lambda\in L^q(\Gamma\times B_\delta)$ and, combining with \eqref{eq: LqW1qz}, we get $\lambda\in W^{1,q}(\Gamma\times B_\delta)$ for any $q\in (1,+\infty)$. This implies in particular that $\lambda\in C^{0,\alpha}(\Gamma\times B_\delta)$. 
		\end{proof}
\end{lemma}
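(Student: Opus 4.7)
My plan is to handle properties (1)--(3) as direct algebraic consequences of the characterisation \eqref{eq: can harm map}, the algebraic identities \eqref{eq: algebraic properties j}, and the linearity built into Lemma \ref{lem: solvability psi}. Given $u_i\in\mathscr{H}(M,\llb\Gamma_i\rrb)$ with $ju_i=\d^*\psi_i+\omega_i$, where $\psi_i=\psi(\llb\Gamma_i\rrb)$ and $\omega_i$ is harmonic, we compute $j(u_1u_2)=ju_1+ju_2=\d^*(\psi_1+\psi_2)+(\omega_1+\omega_2)$. Uniqueness in $\mathcal{H}_\Delta^2(M)^\perp$ and distributional linearity of $\Delta$ force $\psi_1+\psi_2=\psi(\llb\Gamma_1\rrb+\llb\Gamma_2\rrb)$, so $u_1u_2\in\mathscr{H}(M,\llb\Gamma_1\rrb+\llb\Gamma_2\rrb)$. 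Property (2) follows by induction on (1), and property (3) follows identically from $j(u^{-1})=-ju$ and $-\psi(\llb\Gamma\rrb)=\psi(-\llb\Gamma\rrb)$.

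For property (4), the approach is to produce a reference map $u_*$ with the prescribed local structure in Fermi coordinates, check that it differs from $u_\circ$ by a globally defined phase, and then prove the required Sobolev regularity of that phase. The existence of a map $u_*\colon M\to\s^1$ which is smooth on $M\setminus\Gamma$, has degree $+1$ around $\Gamma$ and satisfies $u_*|_{T_\delta}=z/|z|$ can be taken from the cited construction in \cite[Appendix A]{Badran-DelPino2024}. A Hodge decomposition $ju_*=\d\mu+\d^*\psi_*+\omega_*$ holds distributionally in $M$; since $\d ju_*=Ju_*=2\pi\star\llb\Gamma\rrb$, Lemma \ref{lem: solvability psi} forces $\psi_*=\psi(\llb\Gamma\rrb)$, and by multiplying $u_*$ by an appropriate global map of the form $x\mapsto \exp(i\int_{\gamma(x_\circ,x)}\alpha)$ with $\alpha\in H^1(M,2\pi\Z)$, we can match the harmonic part $\omega_*$ with the one of $u_\circ$. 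Hence $u_\circ \cdot u_*^{-1}$ has $j(u_\circ u_*^{-1})=\d\mu'$ for some globally defined (modulo $2\pi\Z$) primitive, which allows us to write $u_\circ=e^{i\lambda}u_*$ in $T_\delta$, yielding the desired representation.

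The main task, and the main obstacle, is to prove that $\lambda\in W^{1,q}(M,\R)$ for every $q\in(1,+\infty)$. Taking codifferentials and using \eqref{eq: codiff ju} together with the distributional harmonicity of $u_\circ$, one obtains the equation $-\Delta_M\lambda=\d^*ju_*$. The key point is that in Fermi coordinates the right-hand side decomposes as in \eqref{eq: expansion djustar}, producing a singularity of type $|z|^{-1}$ across $\Gamma$; this is an $L^p_z$-function for every $p<2$ but never $L^2_z$, which is precisely why we cannot hope for $W^{1,2}$ regularity and must use anisotropic spaces. The strategy is to freeze $y\in\Gamma$, apply the Calder\'on--Zygmund theory in anisotropic spaces (Stefanov--Torres) to obtain $\lambda\in L^q_yW^{2,p}_z(\Gamma\times B_\delta)$ for $p\in(1,2)$ and $q\in(1,+\infty)$, and then exploit the $2$-dimensionality of $B_\delta$ and Sobolev embedding $W^{2,p}\hookrightarrow W^{1,q}$ to upgrade this to $\lambda\in L^q_yW^{1,q}_z$.

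Finally, I would differentiate the equation along $\Gamma$: since the curvature terms in \eqref{eq: expansion djustar} are Lipschitz in $y$, the tangential derivative $D_y\lambda$ solves $-\Delta(D_y\lambda)=D_yf$ with right-hand side of the same anisotropic type, and one repeats the Calder\'on--Zygmund/Sobolev embedding argument to get $D_y\lambda\in L^q(\Gamma\times B_\delta)$. Combining with the tangential/normal splitting yields $\lambda\in W^{1,q}(\Gamma\times B_\delta)$ for every $q\in(1,+\infty)$, hence $\lambda\in C^{0,\alpha}$ by Morrey embedding. The delicate perturbative step --- namely that the metric \eqref{eq: Fermi metric} is a $\O(|z|^2)$ perturbation of the product metric, so the Schauder-type arguments of \cite[\S2.3]{FernandezReal-RosOton} apply to pass from $\Delta_{\Gamma\times B_\delta}$ to $\Delta_{T_\delta}$ --- is where most of the care will be required, but it is technical rather than conceptual.
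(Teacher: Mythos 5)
Your proposal is correct and follows essentially the same route as the paper: properties (1)--(3) via the characterisation \eqref{eq: can harm map} (with the linearity/uniqueness of $\psi(\cdot)$ making the paper's appeal to the group structure explicit), and property (4) via the reference map $u_*$ from \cite[Appendix A]{Badran-DelPino2024}, matching of the harmonic part by an integral phase, the equation $-\Delta_M\lambda=\d^*ju_*$ with the expansion \eqref{eq: expansion djustar}, anisotropic Calder\'on--Zygmund estimates in $L^q_yL^p_z$, Sobolev embedding in the two-dimensional cross-section, and tangential differentiation to conclude $\lambda\in W^{1,q}$ and hence $C^{0,\alpha}$. No gaps to report.
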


\subsection{The renormalised energy}\label{subs: ren en}
From now on, we specialise to the case where $\Gamma$ is embedded and $C^{1,1}$. First, we fix a map $u_\circ\in\mathscr{H}(M,\llb\Gamma\rrb)$, with decomposition 
\begin{equation}\label{eq: decomp ucirc}
	ju_\circ=\d^*\psi(\llb\Gamma\rrb)+\omega_\circ.
\end{equation}
Then, let $\delta>0$ sufficiently small and let $M_\delta\coloneqq M\setminus T_\delta$, where $T_\delta$ is a $\delta$-tubular neighbourhood of $\Gamma$. Then the energy of $u_\circ$ on $M_\delta$ is finite and we investigate its asymptotic expansion as $\delta\to 0$ (see Proposition \ref{prop: expansion can harm map}), where $W_M(\Gamma)$ will make its appearance. 

\medskip

The renormalised energy arises from the double integral 
\begin{equation}\label{eq: double interaction}
	4\pi^2\int_{\Gamma}\int_{\Gamma}
	(G_M(x,y),\vec\Gamma(x)\wedge\vec\Gamma(y))_g,
\end{equation}
encoding a self-interaction of the singular set mediated by the fundamental solution $G_M\coloneqq G_M^{n-2}$. Observe that \eqref{eq: double interaction} is not finite due to non-summability near the diagonal $\{x=y\}$; the renormalised energy $W_M(\Gamma)$ is defined exactly as the desingularisation of \eqref{eq: double interaction}. To do so, we use the following result.

\begin{lemma}\label{lem: decomposition w}
	Let $\llb\Gamma\rrb$ be admissible, with embedded support and $C^{1,1}$. Let $A$ be the solution to $\Delta A=2\pi\llb\Gamma\rrb$ as in the proof of Lemma \ref{lem: solvability psi}.
	Denote with $(y,z)$ the Fermi coordinates in $T_{\delta_0}$ and let $S\in D_{n-2}(M)$ be the $(n-2)$-current defined as 
	\begin{equation*}
		S=\begin{cases}
			-\xi(|z|)\log|z|\vec\Gamma(y)& \text{in }\ T_{\delta_0}\\
			0& \text{in }\ M\setminus T_{\delta_0}
		\end{cases}
	\end{equation*}
	where $\xi$ is a smooth cut-off function such that $\xi\equiv 1$ in $[0,\delta_0/2)$ and $\xi\equiv 0$ in $[\delta_0,+\infty)$
	Then, the current
	\begin{equation*}
		R\coloneqq A+S\in D_{n-2}(M)
	\end{equation*}
	is of class $W^{1,q}(M)$ for every $q\in (1,+\infty)$. In particular, $R$ is H\"older continuous.
\end{lemma}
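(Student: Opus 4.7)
The strategy is to show $\Delta R\in L^p(M)$ for every $p\in(1,+\infty)$ and then invoke $L^p$-theory for the Hodge Laplacian on the closed manifold $M$. By design, the form $S$ subtracts the leading logarithmic singularity of $A$ along $\Gamma$, so that the Dirac contribution arising from $\Delta S$ cancels the source $2\pi\llb\Gamma\rrb$ coming from $\Delta A$, leaving only integrable remainders.

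First, $R\in L^1(M)$: the Hodge Green's form $G_M^{n-2}$ has the same diagonal singularity as the scalar Newton potential, which combined with integration over the codimension-two submanifold $\Gamma$ yields a $|\log|z||$ profile for $A$ near $\Gamma$, while $S$ is bounded by $|\log|z||$ directly. Next, in the inner tube $T_{\delta_0/2}$ where $\xi\equiv 1$, one has $S=-\log|z|\vec\Gamma(y)$. Using the Fermi-metric expansion \eqref{eq: Fermi metric}, the Hodge Laplacian splits as $\Delta=\Delta_{\mathrm{flat}}+E$, where $\Delta_{\mathrm{flat}}$ is the Hodge Laplacian of the product metric $g_\Gamma\oplus I_2$ on $\Gamma\times B_{\delta_0}$ and $E$ is a second-order correction whose coefficients vanish to order $O(|z|^2)$; the Weitzenb\"ock curvature terms are bounded. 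For the flat product part,
\begin{equation*}
\Delta_{\mathrm{flat}}\bigl(-\log|z|\,\vec\Gamma(y)\bigr)=\bigl(\Delta_z(-\log|z|)\bigr)\vec\Gamma(y)+(-\log|z|)\Delta_y\vec\Gamma(y).
\end{equation*}
The first summand is the distribution $2\pi\delta_0(z)\vec\Gamma(y)$, which as an $(n-2)$-current equals $2\pi\llb\Gamma\rrb$ and cancels the source in $\Delta A$ (with the appropriate sign convention). The second is $|\log|z||$ times $\Delta_y\vec\Gamma(y)$; since $\Gamma\in C^{1,1}$, the frame $\vec\Gamma$ is Lipschitz with weak second tangential derivatives in $L^\infty$, so this term lies in $L^p$ for every $p<\infty$. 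The correction operator $E$ acting on $-\log|z|\vec\Gamma(y)$ produces bounded quantities of type $O(|z|^2)\cdot O(|z|^{-2})=O(1)$. Finally, in the transition region $\{\delta_0/2<|z|<\delta_0\}$ the cut-off derivatives $\xi',\xi''$ contribute only smooth, bounded terms.

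Collecting these contributions gives $\Delta R\in L^p(M)$ for every $p\in(1,+\infty)$. Combined with $R\in L^1(M)$, the $L^p$-theory for the self-adjoint elliptic operator $\Delta^{n-2}$ on the closed manifold $M$ (cf.~\cite{Schwarz1995}) yields $R\in W^{2,p}(M)$ for every $p\in(1,+\infty)$, hence $R\in W^{1,q}(M)$ for every $q\in(1,+\infty)$ and, by Sobolev embedding, $R\in C^{0,\alpha}(M)$ for every $\alpha\in(0,1)$.

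The main technical obstacle is the careful analysis of $\Delta S$ in Fermi coordinates adapted to a merely $C^{1,1}$ submanifold. One must keep track of all correction terms arising from non-flatness of the metric, cross derivatives between the tangential and normal directions, and derivatives of the orientation frame, and verify that these act on the profile $\log|z|\vec\Gamma(y)$ to produce at worst bounded or logarithmic remainders in $L^p$. A clean organisational device is the Weitzenb\"ock decomposition of $\Delta$ into the connection Laplacian plus zeroth-order curvature terms, which separates the singular scalar Laplacian behaviour from the geometric corrections.
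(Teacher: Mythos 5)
Your reduction to ``$\Delta R\in L^p$ for every $p<\infty$ plus global elliptic regularity'' does not work, and the failure is precisely at the point where you claim the metric corrections are bounded. In Fermi coordinates only the off-diagonal block of the metric \eqref{eq: Fermi metric} is $\O(|z|^2)$; the tangential block and the volume element vary at \emph{first} order in $z$ (through the second fundamental form), so the Laplacian contains the drift $-\left(H_\Gamma(y)+\O(|z|^2)\right)\cdot\nabla_z$, and the Bochner expansion of $\Delta$ on the product form $\log|z|\,\vec\Gamma(y)$ contains the cross term $2g^{jk}\partial_j(\log|z|)\nabla_k\vec\Gamma$ with $O(1)$ connection coefficients. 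Acting on $\log|z|$ these produce remainders of size $|z|^{-1}$ near $\Gamma$, which belong to $L^p(T_{\delta_0})$ only for $p<2$ (the integration transverse to $\Gamma$ is two-dimensional). Hence $\Delta R\notin L^p$ for $p\geq 2$ unless $\Gamma$ is totally geodesic, the isotropic Calder\'on--Zygmund step gives only $R\in W^{2,p}$ with $p<2$, and Sobolev embedding then yields $W^{1,q}$ only for $q\leq np/(n-p)$ --- not all $q\in(1,+\infty)$, and no H\"older continuity once $n\geq 4$. This is a genuine gap, not a presentational one: the whole point of the lemma is to reach $W^{1,q}$ for \emph{every} $q$.

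The paper's proof is built around exactly this obstruction. It computes, via Weitzenb\"ock/Bochner as you do, that $\Delta R=T$ where $T$ consists of regular terms plus terms of type $\xi(y)\log|z|$ and $\zeta(y)\nabla_z\log|z|$, and then exploits the anisotropic structure: $T\in L^q_yL^p_z(\Gamma\times B_\delta)$ with $p<2$ in the two-dimensional normal variable but $q$ arbitrary in $y$. Anisotropic Calder\'on--Zygmund estimates plus the two-dimensional Sobolev embedding in the $z$-slice give $L^q_yW^{1,q}_z$ control, and differentiating the equation in $y$ (using the Lipschitz dependence of the coefficients on $y$) recovers the tangential derivative, exactly as in point (4) of Lemma \ref{lem: propertis Hscr}. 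To repair your argument you would need to import this two-step anisotropic scheme (or some substitute that sees that the $|z|^{-1}$ singularity lives only in two transverse variables). Two smaller points: for $\Gamma$ merely $C^{1,1}$ the frame $\vec\Gamma$ is only Lipschitz, so your claim that $\Delta_y\vec\Gamma\in L^\infty$ (second tangential derivatives of a Lipschitz frame) is not justified as stated and needs the same care; and the sign bookkeeping making $\Delta S$ cancel, rather than double, the source $2\pi\llb\Gamma\rrb$ should be carried out explicitly rather than delegated to ``the appropriate convention''.
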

We postpone the proof of Lemma \ref{lem: decomposition w} to \S\ref{subs: Proof energy expansion} and describe the renormalised energy.
The interaction integral \eqref{eq: double interaction} can be expressed as 
\begin{align*}
	4\pi^2\int_{\Gamma}\int_{\Gamma}
	(G_M(x,y),\vec\Gamma(x)\wedge\vec\Gamma(y))_g&=4\pi^2\int_\Gamma\left(\int_\Gamma\langle G_M(\cdot,y),\vec\Gamma(y)\rangle_y\right)\\
	&=2\pi\int_\Gamma\int_M\langle G_M(\cdot,y),2\pi\llb\Gamma(y)\rrb\rangle_y\\
	&=2\pi\int_\Gamma A
\end{align*}
We define the renormalised energy as the (negative) desingularisation of this integral, obtained by replacing $A$ with its regular part, namely 
\begin{equation}\label{eq: ren en obscure}
	W_{M}(\Gamma)\coloneqq -2\pi\int_{\Gamma}R.
\end{equation}
Now, we can state the next result.
\begin{proposition}\label{prop: expansion can harm map}
	Let $u_\circ\in\mathscr{H}(M,\llb\Gamma\rrb)$ satisfy \eqref{eq: decomp ucirc}. Then, the energy expansion 
	\begin{equation}\label{eq: expansion ucirc}
		\int_{M_\delta}|\d u_\circ|^2=2\pi\mathcal{H}^{n-2}(\Gamma)\log(1/\delta)+W_M(\Gamma)+e(u_\circ)+\O(\delta^\beta)
	\end{equation}
	holds as $\delta\to 0^+$, where $e(u_\circ)=\int_{M}|\omega_\circ|^2$ and $\beta\in(0,1).$
\end{proposition}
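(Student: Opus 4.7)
\noindent\emph{Proof plan.} The approach is to use the Hodge decomposition \eqref{eq: decomp ucirc} to write pointwise $|\d u_\circ|^2 = |ju_\circ|^2 = |\d^*\psi|^2 + 2(\d^*\psi,\omega_\circ)_g + |\omega_\circ|^2$ on $M_\delta$ and to analyse the three integrals separately. The last one is immediate: since $\omega_\circ$ is smooth and $|T_\delta| = \O(\delta^2)$, one has $\int_{M_\delta}|\omega_\circ|^2 = e(u_\circ) + \O(\delta^2)$. For the cross term, since $\omega_\circ$ is harmonic (hence closed), the Green's identity $\int_{M_\delta}(\omega_\circ, \d^*\psi)_g = \int_{M_\delta}(\d\omega_\circ, \psi)_g - \int_{\partial M_\delta}\omega_\circ\wedge \star\psi$ reduces the cross term to a boundary contribution. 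Using that $\psi \in W^{1,q}(M)$ for every $q$ by Lemma~\ref{lem: decomposition w} (with a logarithmic blow-up inherited from $A$), that $|\omega_\circ|$ is bounded, and that $\mathcal{H}^{n-1}(\partial T_\delta) = \O(\delta)$, this contribution is $\O(\delta|\log\delta|)$, which is absorbed in $\O(\delta^\beta)$ for any $\beta<1$.

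\medskip

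The core of the argument is the computation of $\int_{M_\delta}|\d^*\psi|^2$. Since $\d\d^*\psi = \Delta\psi = 2\pi\star\llb\Gamma\rrb$ vanishes as a distribution on $M_\delta$ (as $\mathrm{supp}\,\llb\Gamma\rrb \subset T_\delta$), the same Green's formula applied with $\alpha=\d^*\psi$ and $\beta=\psi$ yields
\[
    \int_{M_\delta}|\d^*\psi|^2 = \int_{\partial T_\delta}\d^*\psi\wedge \star\psi.
\]
I would then decompose $\psi = \star R - \star S$ according to Lemma~\ref{lem: decomposition w} and expand the boundary integral into its four bilinear pieces. In Fermi coordinates $(y,z)$ the singular part satisfies $\star S \sim -\log|z|\,\d z_1\wedge \d z_2$ and, correspondingly, $\d^*\psi$ has the planar vortex form $(-z_2\,\d z_1 + z_1\,\d z_2)/|z|^2$ up to bounded corrections. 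The self-interaction of the $S$-part, evaluated on $\{|z|=\delta\}$ and then integrated in $y$, gives fibrewise the integral $-\log\delta\int_{\s^1_\delta}|z|^{-1}\d\mathcal{H}^1 = 2\pi\log(1/\delta)$, producing the leading term $2\pi\mathcal{H}^{n-2}(\Gamma)\log(1/\delta)$. The two mixed terms between $R$ and $S$ converge, as $\delta\to 0$, to $-2\pi\int_\Gamma R = W_M(\Gamma)$ by the very definition \eqref{eq: ren en obscure}. The self-interaction of the $R$-part is $\O(\delta)$, since $R$ is bounded and $\d R\in L^q$.

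\medskip

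The main obstacle is the quantitative control of these boundary integrals with rate $\O(\delta^\beta)$. Two sources of error must be tracked: firstly, the Fermi metric \eqref{eq: Fermi metric} is only $I + \O(|z|^2)$, so $\star$, $\d^*$ and the volume element all carry $\O(\delta^2)$ corrections when evaluated on $\partial T_\delta$; secondly, the passage to the limit in the mixed $R$--$S$ terms requires replacing $R(y,z)$ by $R(y,0)$ on $\partial T_\delta$, which costs an $\O(\delta^\alpha)$ error with $\alpha$ the H\"older exponent of $R$ from Lemma~\ref{lem: decomposition w}. Choosing $\beta<\min(1,\alpha)$ accommodates both contributions and gives the expansion \eqref{eq: expansion ucirc}.
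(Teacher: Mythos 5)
Your overall skeleton matches the paper's proof: split $|ju_\circ|^2=|\d^*\psi|^2+2(\d^*\psi,\omega_\circ)_g+|\omega_\circ|^2$, dispose of the last two terms (your boundary-term treatment of the cross term is a harmless variant of the paper's estimate via $\int_M(\d^*\psi,\omega_\circ)_g=0$ and $\|\d A\|_{L^1(T_\delta)}\leq C\delta$), and reduce the main term by Stokes to a boundary integral over $\partial T_\delta$, which you then analyse through the decomposition $A=R-S$ of Lemma \ref{lem: decomposition w}. The leading-order computation from the $S$-part and the identification of $-2\pi\int_\Gamma R=W_M(\Gamma)$ via the H\"older continuity of $R$ (replacing $R(y,z)$ by $R(y,0)$ at cost $\O(\delta^\alpha)$) are sound and essentially what the paper does.

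The gap is in the terms where you split the \emph{second} factor, i.e.\ where $\d R$ appears restricted to $\partial T_\delta$: the ``$R$-self-interaction'' $\int_{\partial T_\delta}R\wedge\star\,\d R$, which you claim is $\O(\delta)$ ``since $R$ is bounded and $\d R\in L^q$'', and the mixed term $\int_{\partial T_\delta}S\wedge\star\,\d R$, which must be shown to vanish with a rate (note it carries a factor $\log(1/\delta)$ from $S$ on $\partial T_\delta$; as written you lump it into ``the two mixed terms converge to $W_M(\Gamma)$'', but only the $R\wedge\star\,\d S$ pairing produces $W_M(\Gamma)$). An $L^q(M)$ bound on $\d R$ does not control its trace on the specific hypersurface $\{|z|=\delta\}$: Fubini only gives such control for a.e.\ radius, and a good-slice argument does not rescue the expansion at every $\delta$ because the energy in an annulus $T_{\delta'}\setminus T_\delta$ is itself of order $\log(\delta'/\delta)$, i.e.\ not small. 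So these two boundary terms are unjustified as stated, and they are exactly what stands between you and the rate $\O(\delta^\beta)$. The paper avoids this by never splitting $\d A$ on the boundary: Lemma \ref{lem: integration of forms} (integration by parts reinstating $\Delta A=2\pi\llb\Gamma\rrb$) applied with $\alpha=R$ and $\alpha$ the constant extension of $\vec\Gamma$ converts $\int_{\partial T_\delta}\alpha\wedge\star\,\d A$ into $-2\pi\int_\Gamma\alpha+\int_{T_\delta}(\d\alpha,\d A)_g$, so that only \emph{bulk} quantities involving $\d A$ remain, and these are controlled by Corollary \ref{cor: estimates A} ($\|\d A\|_{L^r(T_\delta)}\leq C\delta^\beta$); moreover $W_M(\Gamma)$ then appears exactly rather than only in the limit. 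To repair your argument, either adopt this bulk reformulation for the $\d R$-pairings, or prove a genuine trace/pointwise estimate for $\d R$ near $\Gamma$ (e.g.\ via interior elliptic estimates on dyadic annuli for $\Delta R=T$), neither of which is supplied by your current citation of $\d R\in L^q$.
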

Expression \eqref{eq: ren en obscure} for the renormalised energy is perhaps a bit obscure. Note that to understand the interaction between \emph{different parts} of the singular set, expression \eqref{eq: double interaction} remains valid, as the desingularisation happens just locally near the diagonal, see Appendix \ref{app: ren en Rn}. On the other hand, the singular part of $A$ is what contributes to the first (blowing-up) term in the energy expansion \eqref{eq: expansion ucirc}.

\subsection{Proof of Proposition \ref{prop: expansion can harm map}}\label{subs: Proof energy expansion}
To prove the Proposition we need two preliminary lemmas. The first one is Lemma \ref{lem: decomposition w}, which we prove next.
\begin{proof}[Proof of Lemma \ref{lem: decomposition w}]
		By the Weitzenb\"ock formula, the Hodge Laplacian on a $k$-form $\omega$ is given by 
		\begin{equation*}
			\Delta \omega=\nabla^*\nabla \omega+\Ric(\omega)
		\end{equation*}
		where $\nabla^*\nabla$ is the Bochner Laplacian and $\Ric(\omega)$ is the Weitzenb\"ock curvature of $\omega$ (see \cite{Petersen2006}). The Bochner Laplacian has a metric representation given by 
		\begin{equation*}
			\nabla^*\nabla =-g^{jk}\nabla_j\nabla_k-\frac{1}{\sqrt{|g|}}\partial_j(\sqrt{|g|}g^{jk})\nabla_k
		\end{equation*}
		where $\nabla$ is the Levi-Civita connection on $(M,g)$. We are interested in computing the above quantity on a form given by $\omega=f(z)\eta(y)$, where $f$ is a scalar function and $\eta$ is a smooth form on $\Gamma$, being $(y,z)$ Fermi coordinates. 
		
		\medskip
		
		By the properties of the Levi-Civita connection, we have
		\begin{align*}
			\nabla_k(f\eta)&=f\nabla_k\eta+\partial_kf\eta \\
			\nabla_j\nabla_k(f\eta)&=f\nabla_j\nabla_k\eta+\partial_jf\nabla_k\eta+\partial_kf\nabla_j\eta+\partial_{jk}f\eta
		\end{align*}
		which implies
		\begin{equation*}
			\nabla^*\nabla(f\eta)=f\nabla^*\nabla\eta+2g^{jk}\partial_jf\nabla_k\eta-(\Delta_Mf)\eta.
		\end{equation*}
		Applying these formulas to our case, namely $f(z)=\log|z|$ and $\eta(y)=\vec\Gamma(y)$, and using that on a function $f(z)$ of the normal coordinate $z$ the Laplace--Beltrami operator is
		\begin{equation*}
			\Delta_Mf(z)=\Delta_zf-(H_\Gamma(y)+\O(|z|^2))\cdot\nabla_zf,
		\end{equation*}
		we get
		\begin{equation*}
			-(\Delta_z\log|z|)\vec\Gamma(y)=2\pi\llbracket\Gamma\rrbracket-T
		\end{equation*}
		where $T$ is a sum of terms that are either regular or that can be expressed as $\xi(y)\log|z|$ or $\zeta(y)\nabla_z\log|z|$, for some Lipschitz forms $\xi,\zeta$ on $\Gamma$. Then, $R$ solves \begin{equation*}
			\Delta R=\Delta A+\Delta S=T.
		\end{equation*}
		As in point (4) of Lemma \ref{lem: propertis Hscr}, $W^{1,q}$ regularity of $R$ follows by the fact that the right hand side belongs to $L^q_yL^p_z(\Gamma\times B_\delta)$, with $(p,q)\in(1,2)\times(2,+\infty)$, on an arbitrarily small tubular neighbourhood of $\Gamma$, applying Calder\'on--Zygmund estimates and Sobolev inequality.
		\end{proof}

\begin{corollary}\label{cor: estimates A}
	For every $r\in [1,2)$ there exists $C>0$ and $\beta\in(0,1)$ such that 
	\begin{align*}
		\|\d A\|_{L^r(T_\delta)}&\leq C\delta^\beta.
	\end{align*}
	\begin{proof}
		By Lemma \ref{lem: decomposition w}, $\d R$ is $L^q$ for every $q>2$ and its $L^r(T_\delta)$ norm is bounded by $\|\d R\|_{L^q}\operatorname{Vol}(T_\delta)^{1/r-1/q}=\O(\delta^\frac{2(q-r)}{rq})$. The term $\d S=-\d(\log|z|)\wedge\vec\Gamma(y)$ can be integrated in Fermi coordinates and the integral is controlled by 
		\begin{equation*}
			\int_{T_\delta}|\d S|^r\leq C(\Gamma,M)\int_{B_\delta\subset\R^2}\frac{\d z}{|z|^r}\leq C(r)\delta^{2-r}.
		\end{equation*}
	\end{proof}
\end{corollary}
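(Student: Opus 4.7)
The plan is to exploit the decomposition $A = R - S$ furnished by Lemma \ref{lem: decomposition w}, which isolates the logarithmic singularity of $A$ along $\Gamma$ in the explicit term $S$ while leaving a remainder $R \in W^{1,q}(M)$ for every $q \in (1, +\infty)$. By the triangle inequality
\begin{equation*}
    \|\d A\|_{L^r(T_\delta)} \leq \|\d R\|_{L^r(T_\delta)} + \|\d S\|_{L^r(T_\delta)},
\end{equation*}
so it suffices to bound each term separately by a positive power of $\delta$.

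For the regular piece, I would use H\"older's inequality: fixing any $q > r$ (and taking $q$ large using the $W^{1,q}$ regularity from Lemma \ref{lem: decomposition w}),
\begin{equation*}
    \|\d R\|_{L^r(T_\delta)} \leq \|\d R\|_{L^q(M)} \, \operatorname{Vol}(T_\delta)^{1/r - 1/q}.
\end{equation*}
Since $\Gamma$ is a codimension-$2$ submanifold and $T_\delta$ is a $\delta$-tubular neighbourhood, $\operatorname{Vol}(T_\delta) \lesssim \delta^2$, hence $\|\d R\|_{L^r(T_\delta)} \lesssim \delta^{2(1/r - 1/q)}$, which is a positive power of $\delta$ for any $q > r$.

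For the singular piece, I would compute directly in Fermi coordinates. Since $S = -\xi(|z|)\log|z|\,\vec\Gamma(y)$ with $\xi$ a smooth cutoff, the differential $\d S$ behaves pointwise like $|z|^{-1}$ near $\Gamma$, with coefficients bounded uniformly in $y$. Using the volume form in Fermi coordinates and the fact that $\mathcal{H}^{n-2}(\Gamma) < \infty$,
\begin{equation*}
    \int_{T_\delta} |\d S|^r \leq C(\Gamma,M)\int_{B_\delta \subset \R^2}\frac{\d z}{|z|^r} = \frac{2\pi\, C(\Gamma,M)}{2-r}\delta^{2-r},
\end{equation*}
which is finite precisely because $r < 2$. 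Taking $r$-th roots yields $\|\d S\|_{L^r(T_\delta)} \lesssim \delta^{(2-r)/r}$.

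Combining, the exponent $\beta$ can be chosen as any positive number strictly less than $\min\{(2-r)/r,\, 2(1/r - 1/q)\}$ (and truncated below $1$ to satisfy $\beta \in (0,1)$, which is harmless since both exponents are positive). The main technical point is simply the careful use of Fermi coordinates together with the Weitzenb\"ock-based decomposition established in Lemma \ref{lem: decomposition w}; there is no real obstacle beyond a correct bookkeeping of exponents, as everything rests on having already extracted the sharp singular profile of $A$.
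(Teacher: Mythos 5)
Your argument is correct and follows essentially the same route as the paper: split $\d A$ via the decomposition $R=A+S$ of Lemma \ref{lem: decomposition w}, control $\d R$ on $T_\delta$ by H\"older together with $\operatorname{Vol}(T_\delta)\lesssim\delta^2$, and integrate the explicit $|z|^{-1}$ singularity of $\d S$ in Fermi coordinates to get the $\delta^{2-r}$ bound. Your final step of taking $\beta$ below the minimum of the two exponents is exactly the bookkeeping the paper leaves implicit.
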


\begin{lemma}\label{lem: integration of forms}
	If $\alpha\in D_{n-2}(M)$, then 
	\begin{equation}\label{stokes}
	\int_{\partial T_\delta}\alpha\wedge\star \d A=\int_{T_\delta}( \d\alpha, \d A)_g-2\pi \int_{\Gamma}\alpha
\end{equation}
\begin{proof}
	By direct differentiation we know that
	\begin{align*}
		\d(\alpha\wedge \star \d A)&=\d\alpha\wedge \star \d A+(-1)^n\alpha \wedge \d\star \d A \\
		&=(\d\alpha,\d A)_g +(-1)^n(\alpha,\star \d\star \d A)_g.
	\end{align*}
	Recall that, by \eqref{eq: dd* on w}, $\Delta A=\d^*\d A$ and that 
	\begin{equation*}
		\d^*=(-1)^{n(p+1)+1}\star \d\star
	\end{equation*}
	whenever $\d^*$ acts on $p$-forms on an $n$-dimensional manifold. In our case, $p=n-1$ and $(-1)^{n^2+1}=(-1)^{n+1}$, therefore $(-1)^n\star \d\star \d A=-\Delta A$ and
	\begin{equation*}
		\d(\alpha\wedge \star \d A)=(\d\alpha,\d A)_g -(\alpha,\Delta A)_g.
	\end{equation*}
	Integrating over $T_\delta$ and using Stokes' theorem yields the result, recalling also that $\Delta A =2\pi \llbracket \Gamma\rrbracket$.
\end{proof}
\end{lemma}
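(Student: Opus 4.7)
The plan is to derive the identity as a direct application of Stokes' theorem to the $(n-1)$-form $\alpha\wedge\star\d A$ on $T_\delta$, using the distributional identity $\Delta A = 2\pi\llb\Gamma\rrb$ established in the proof of Lemma \ref{lem: solvability psi} to capture the singularity of $A$ along $\Gamma$. The key technical point is the pointwise identity
\[
\d(\alpha\wedge\star\d A) = (\d\alpha,\d A)_g\operatorname{vol}_g - (\alpha,\Delta A)_g\operatorname{vol}_g,
\]
valid in the classical sense on $M\setminus\Gamma$, whose right-hand side extends distributionally across $\Gamma$ via the pairing of $\alpha$ with $2\pi\llb\Gamma\rrb$.

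To establish the pointwise identity, I would apply the Leibniz rule
\[
\d(\alpha\wedge\star\d A) = \d\alpha\wedge\star\d A + (-1)^{n-2}\alpha\wedge\d\star\d A.
\]
The first term equals $(\d\alpha,\d A)_g\operatorname{vol}_g$ by the defining property of $\star$. For the second, I would invoke the identity $\d^* = (-1)^{n(p+1)+1}\star\d\star$ on $p$-forms with $p=n-1$; combined with $\star\star = (-1)^{k(n-k)}$, a sign chase yields
\[
(-1)^{n-2}\alpha\wedge\d\star\d A = -(\alpha,\d^*\d A)_g\operatorname{vol}_g.
\]
Finally, the relation $\d\d^*A = 0$ from \eqref{eq: dd* on w} forces $\d^*\d A = \Delta A$, so the right-hand side reduces to $-(\alpha,\Delta A)_g\operatorname{vol}_g$. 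Integrating over $T_\delta$ and applying Stokes' theorem then produces the claimed identity, because $\int_{T_\delta}(\alpha,\Delta A)_g$ distributionally equals $2\pi\int_\Gamma\alpha$ in view of $\Delta A = 2\pi\llb\Gamma\rrb$ and the inclusion $\Gamma\subset T_\delta$.

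The main obstacle is justifying this last step rigorously, since the pointwise identity only holds on $M\setminus\Gamma$ and $\d A$ blows up like $1/|z|$ in Fermi coordinates as $z\to 0$, so Stokes' theorem cannot be applied directly across the singular set. I would handle this by exhausting $T_\delta\setminus\Gamma$ by annular regions $T_\delta\setminus\overline{T_\eps}$, where $A$ is smooth and Stokes applies classically, producing the desired boundary term on $\partial T_\delta$ together with a contribution over $\partial T_\eps$. Exploiting the decomposition of $A$ into a logarithmic singular part proportional to $\log|z|\vec\Gamma(y)$ and a $W^{1,q}$ remainder supplied by Lemma \ref{lem: decomposition w}, the regular part contributes negligibly on $\partial T_\eps$ as $\eps\to 0^+$ thanks to the $L^r$ control in Corollary \ref{cor: estimates A}, while a direct computation in Fermi coordinates shows that the logarithmic part generates exactly $-2\pi\int_\Gamma\alpha$ in the limit, via the normalisation $\int_0^{2\pi}\d\theta = 2\pi$ of the angular measure on an infinitesimal normal circle. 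The interior integral $\int_{T_\delta}(\d\alpha,\d A)_g$ is meanwhile well-defined because $\d A\in L^r(T_\delta)$ for some $r\in[1,2)$ by Corollary \ref{cor: estimates A}, while $\d\alpha$ is smooth and bounded.
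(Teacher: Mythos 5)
Your proposal follows essentially the same route as the paper: the Leibniz rule for $\d(\alpha\wedge\star\d A)$, the sign identity for $\d^*$ on $(n-1)$-forms combined with $\d\d^*A=0$ to get $\d^*\d A=\Delta A$, and then Stokes' theorem together with $\Delta A=2\pi\llb\Gamma\rrb$. Your additional excision argument on $T_\delta\setminus\overline{T_\eps}$, using the decomposition of Lemma \ref{lem: decomposition w} to evaluate the limiting contribution of $\partial T_\eps$, is a correct and more careful justification of the final step that the paper carries out only at the distributional level.
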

Next, we compute
\begin{equation}\label{expansion energy gruyere}
	\begin{split}
		\int_{M_\delta}|\d u_\circ|^2&=\int_{M_\delta}|ju_\circ|^2\\&=\int_{M_\delta}|\d^*\psi|^2+\int_{M_\delta}|\omega_\circ|^2+2\int_{M_\delta}(\d^*\psi,\omega_\circ)_g.
	\end{split}
\end{equation}
By boundedness of $|\omega_\circ|$, 
\begin{equation*}
	\int_{M_\delta}|\omega_\circ|^2=\int_{M}|\omega_\circ|^2+\O(\delta^2)
\end{equation*}
and by Corollary \ref{cor: estimates A} we have 
\begin{equation*}
	\left|\int_{M_\delta}(\d^*\psi,\omega_\circ)_g\right|=\left|\int_{T_\delta}(\d^*\psi,\omega_\circ)_g \right|\leq C \|\omega_\circ\|_{\infty}\|\d A\|_{L^1(T_\delta)}\leq C\delta
\end{equation*}
where the first equality follows by the fact that $\int_M(\d^*\psi,\omega_\circ)_g=\int_M(\psi,\d\omega_\circ)_g=0$.

\medskip

The squared $L^2$ norm of $\d^*\psi$ is the term that will yield the area and renormalised energy contributions in the energy expansion. By Stokes' theorem 
\begin{equation}\label{eq: square norm singular term}
	\int_{M_\delta}|\d^*\psi|^2=\int_{M_\delta}|\d A|^2=\int_{\partial M_\delta}A\wedge\star \d A
\end{equation}
where we used that the support of $\Delta A$ is $\Gamma$.
To compute \eqref{eq: square norm singular term}, we use Lemma \ref{lem: decomposition w} and Lemma \ref{lem: integration of forms}. We have 
\begin{equation}\label{eq: decomposition}
	\begin{split}
		\int_{\partial M_\delta}A\wedge\star \d A&=\int_{\partial T_\delta}A\wedge\star \d A\\
		&=-\int_{\partial T_\delta}S\wedge\star \d A+\int_{\partial T_\delta}R\wedge\star \d A\\
		&=2\pi\log(1/\delta)\int_{\Gamma}\vec\Gamma-2\pi\int_{\Gamma}R+2\pi\int_{T_\delta}( \d R,\d A)_g\\
		&=2\pi\mathcal{H}^{n-2}(\Gamma)\log(1/\delta)+W_{M}(\Gamma)+2\pi\int_{T_\delta}( \d R,\d A)_g
	\end{split}
\end{equation}
where we used that $\d\vec\Gamma=0$ and that
\begin{align}
	\int_{\Gamma}\vec\Gamma&=\int_{\Gamma}(\vec\Gamma(y),\vec\Gamma(y))_g d\mathcal{H}^{n-2}(y)=\mathcal{H}^{n-2}(\Gamma).\label{eq: Gamma_j}
\end{align}
Finally, by Corollary \ref{cor: estimates A}, if $r\in (1,2)$ and $1/r+1/q=1$
\begin{equation*}
	\int_{T_\delta}(\d R,\d A)_g \leq C \|\d R\|_{q}\|\d A\|_{L^r(T_\delta)}\leq C\delta^\beta
\end{equation*}
and the proof is concluded.\qed

\section{Existence of minimisers and relaxed energies expansion}\label{sec: minimisers}
The second part of this work investigates three different variational problem approximating the Dirichlet energy with prescribed singular set. The first one is obtained by removing a $\delta$-tubular neighbourhood of $\Gamma$ and minimising among all the $W^{1,2}$ maps with degree +1 around $\Gamma$, in the spirit of \cite{Bethuel-Brezis-Helein1994}. By a direct comparison, together with some regularity near the boundary obtained with a scheme \`a la De Giorgi, we show that the energies of minimisers have an expansion similar to \eqref{eq: expansion ucirc} for $u_\circ$.

\medskip

Secondly, we propose $p$-harmonic and $s$-harmonic energy relaxations, obtained by replacing the critical norm $W^{1,2}$ with the subcritical norms $W^{1,p}$ ($p<2$) and $W^{s,2}$ ($s<1$), respectively. Here, we show that the energy expansion features again the same renormalised energy $W_M(\Gamma)$, revealing the self-interaction of the singular set visible only in the diffuse regime ($p<2$ or $s<1$).

\subsection{Relaxation by removing $\delta$-tubes}
Given $\delta>0$ let $T_\delta$ be a $\delta$-neighbourhood of $\Gamma$ and $M_\delta\coloneqq M\setminus T_\delta$. Consider the energy 
\begin{equation*}
	E_\delta(u)\coloneqq \int_{M_\delta}|\d u|^2.
\end{equation*}
In order to recover the singular set in the limit $\delta\to 0$ we prescribe the degree $\deg(u,\Gamma)$ of $u$ around $\Gamma$ to be +1 (or the desired multiplicity), namely we minimise 
\begin{equation}\label{eq: delta minimisation}
	\min_{u\in W^{1,2}_\Gamma(M_\delta,\s^1)} E_\delta(u)
\end{equation}
where
\begin{equation}\label{eq: W12 with deg 1}
	W^{1,2}_\Gamma(M_\delta,\s^1)\coloneqq \{u\in W^{1,2}(M_\delta,\s^1):\deg(u,\Gamma)=1\}.
\end{equation}
We can give a better description of the set \eqref{eq: W12 with deg 1}. Consider the quotient 
\begin{equation*}
	W^*_{\Gamma}\coloneqq W^{1,2}_\Gamma(M_\delta,\s^1)/\sim
\end{equation*}
where two elements are identified if their ratio is a $W^{1,2}$ phase, namely 
\begin{equation*}
	u\sim v\iff u=e^{i\varphi}v,\quad \varphi\in W^{1,2}(M_\delta,\R).
\end{equation*}
\begin{proposition}\label{prop: topology decomposition}
	Each class in $W^*_{\Gamma}$ can be represented by exactly one class $u_\alpha\in\mathscr{H}(M,\llb\Gamma\rrb)$. In other words, there is a bijection 
	\begin{equation*}
		W^*_{\Gamma}\to \mathscr{H}(M,\llb\Gamma\rrb).
	\end{equation*}
	\begin{proof}
		We start by describing the decomposition of $ju$, where $u$ is any map in $W^{1,2}_\Gamma(M_\delta,\s^1)$. By $W^{1,2}$ integrability, we have $Ju=0$ and in particular 
		\begin{equation}\label{eq: decomp prop}
			ju=\d\varphi+\omega
		\end{equation}
		where $\varphi\in W^{1,2}(M_\delta,\R)$ and $\omega\in\mathcal{H}^1_\Delta(M_\delta)$. Our claim is that, thanks to the admissibility condition, the global topology of $M_\delta$ ``splits'' into the topology of $M$ and the topology around the cavity $T_\delta$.
		
		\medskip
		
		Consider the long exact sequence \eqref{eq: les} with de Rham cohomologies ($G=\R$) 
		\begin{equation*}
			\begin{tikzcd}
				 H^1(M,M\setminus\Gamma) \arrow[r, "j^*"]  & H^1(M) \arrow[r, "i^*"] & H^1(M\setminus\Gamma)\arrow[r,"\d"] & H^2(M,M\setminus\Gamma) \arrow[r, "j^*"]  & H^2(M)
			\end{tikzcd}
		\end{equation*}
		As we did in the proof of Theorem \ref{thm: harmonic maps}, we can argue that $j^*(H^k(M,M\setminus\Gamma))=0$ for $k=1,2$, thanks to the fact that $\Gamma$ is a boundary. This implies that the short sequence 
		\begin{equation*}
			\begin{tikzcd}
				0 \arrow[r]  & H^1(M) \arrow[r, "i^*"] & H^1(M\setminus\Gamma)\arrow[r,"\d"] & H^2(M,M\setminus\Gamma) \arrow[r]  & 0
			\end{tikzcd}
		\end{equation*}
		is exact, and in particular that every $\omega\in H^1(M\setminus\Gamma)$ can be split as $\omega=i^*\omega_1+s\omega_2$, where $\omega_1\in H^1(M)$, $\omega_2\in H^2(M,M\setminus\Gamma)$ and $s$ is a right-inverse of $\d$. 
		
		\medskip
		
		With this fact and by Hodge theorem, equation \eqref{eq: decomp prop} can be written as 
		\begin{equation*}
			ju=\d\varphi+\omega_1+\omega_2
		\end{equation*}
		where $\omega_1\in \mathcal{H}_\Delta^1(M)$ and $\omega_2\in \mathcal{H}_\Delta^1(M\setminus\Gamma)$ satisfying $\omega_2=s\widetilde \omega$, with $\widetilde\omega\in H^2(M,M\setminus\Gamma)$. Using the condition $\deg(u,\Gamma)=1$ and the properties of the distributional degree \cite[\S12.4]{Brezis-Mironescu2021}, one sees that $[\omega_2]=[\d^*\psi]$ where $\psi=\psi(\llb\Gamma\rrb)$ is the one arising from Lemma \ref{lem: decomposition w}. 
		
		\medskip
		
		Lastly, we remark that in the same class of $u$ in $W^*_\Gamma$ lies the map $\widetilde u=e^{-i\varphi}u$ whose decomposition is 
		\begin{equation}\label{eq: decomp utilde}
			j\widetilde u=\d^*\psi+\omega_1.
		\end{equation}
		In particular, $\omega_1\in [\d^*\psi]+H^1(M,2\pi\Z)$ and thus $\widetilde u\in \mathscr{H}(M,\llb\Gamma\rrb)$. Clearly, $\widetilde u$ is the unique map (up to constant phases) in the given sector satisfying \eqref{eq: decomp utilde}. This concludes the proof. 
	\end{proof}
\end{proposition}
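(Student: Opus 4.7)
The strategy is to decompose an arbitrary representative $u \in W^{1,2}_\Gamma(M_\delta, \s^1)$ via Hodge theory on $M_\delta$, split the resulting harmonic piece into a global contribution coming from $H^1(M)$ and a ``cavity'' contribution encoding the linking around $\Gamma$, and then absorb the exact part into a $W^{1,2}$ phase so as to match exactly the parametrisation of $\mathscr{H}(M, \llb\Gamma\rrb)$ provided by Theorem \ref{thm: harmonic maps}.

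First, since $u$ is $\s^1$-valued and $W^{1,2}$, we have $Ju = \d(ju) = 0$ on $M_\delta$. Hodge theory on the compact manifold with boundary $M_\delta$ then yields $ju = \d\varphi + \omega$ with $\varphi \in W^{1,2}(M_\delta, \R)$ and $\omega \in \mathcal{H}^1_\Delta(M_\delta) \simeq H^1(M \setminus \Gamma; \R)$. Now the assumption $\Gamma = \partial \Sigma$ together with the Thom isomorphism $H^k(M, M\setminus\Gamma; \R) \simeq H^{k-2}(\Gamma; \R)$ forces the map $j^*$ in the long exact sequence \eqref{eq: les} to vanish for $k = 1, 2$ (trivially for $k=1$; for $k=2$ because the Thom generator maps to the class Poincaré dual to $[\Gamma] = 0$). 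This yields the split short exact sequence
\begin{equation*}
0 \to H^1(M; \R) \xrightarrow{i^*} H^1(M \setminus \Gamma; \R) \xrightarrow{\d} H^2(M, M \setminus \Gamma; \R) \to 0,
\end{equation*}
decomposing $[\omega] = [\omega_1] + [\omega_2]$ into a globally extendible harmonic form $\omega_1 \in \mathcal{H}^1_\Delta(M)$ and a cavity component $\omega_2 = s\tilde\omega$ lifted via a right inverse $s$ of the connecting map.

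The degree condition $\deg(u, \Gamma) = 1$ then singles out $[\omega_2] = [\d^*\psi]$ with $\psi = \psi(\llb\Gamma\rrb)$ from Lemma \ref{lem: solvability psi}: by the distributional representation of the degree recalled in \cite[\S 12.4]{Brezis-Mironescu2021}, the period of $ju$ along a small circle linking $\Gamma$ once equals $2\pi$, matching exactly the corresponding period of $\d^* \psi$, which is the unique representative whose image under the connecting map is the Thom class $2\pi \star \llb\Gamma\rrb$. Hence one may write $\omega = \omega_1 + \d^*\psi + \d\eta$ for some $\eta \in W^{1,2}$, and $\tilde u := e^{-i(\varphi + \eta)} u$ satisfies $j \tilde u = \d^*\psi + \omega_1$ with $\omega_1 \in [\d^*\psi] + H^1(M, 2\pi\Z)$ by integrality of $j\tilde u = \tilde u^*(\d\theta)$. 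Thus $\tilde u \in \mathscr{H}(M, \llb\Gamma\rrb)$, and uniqueness in $W^*_\Gamma$ follows because two such representatives would differ by a $W^{1,2}$ phase whose differential is a harmonic one-form on $M$, forcing it to be zero modulo a constant. The main obstacle is the matching in the third step of the analytic normalisation (period integrals of $ju$) with the topological normalisation (the Thom class); the rest is a combination of Hodge theory on manifolds with boundary and standard long-exact-sequence bookkeeping.
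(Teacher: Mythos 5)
Your proposal is correct and follows essentially the same route as the paper: Hodge decomposition of $ju$ using $Ju=0$, the long exact sequence of the pair $(M,M\setminus\Gamma)$ to split the harmonic part into a global piece and a cavity piece, identification of the cavity class with $[\d^*\psi]$ via the degree condition, and absorption of the exact part into a $W^{1,2}$ phase, with integrality giving $\omega_1\in[\d^*\psi]+H^1(M,2\pi\Z)$. The only cosmetic differences are that you justify $j^*=0$ via the Thom isomorphism rather than the diagram chase of Theorem \ref{thm: harmonic maps}, and you make explicit the exact correction $\d\eta$ that the paper leaves implicit.
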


By Proposition \ref{prop: topology decomposition}, we can rephrase \eqref{eq: delta minimisation} as a double minimisation problem 
\begin{equation}\label{eq: double minimisation}
	\min_{u\in W^{1,2}_\Gamma(M_\delta,\s^1)} E_\delta(u)=\min_{u_\alpha\in \mathscr{H}(M,\llb\Gamma\rrb)}\min_{u\in W^{1,2}_\alpha(M_\delta,\s^1)}E_\delta(u)
\end{equation}
where 
\begin{equation*}
	W^{1,2}_\alpha(M_\delta,\s^1)\coloneqq \left\lbrace e^{-i\varphi}u_\alpha: \varphi\in W^{1,2}_\alpha(M_\delta,\R)\right\rbrace.
\end{equation*}
This constraint reduces the problem to a scalar one. Indeed, for $u\in W^{1,2}_\alpha(M_\delta,\s^1)$, note that $|\d u|=|ju|=|\d\varphi-ju_\alpha|$, hence the inner minimisation problem in \eqref{eq: double minimisation} becomes equivalent to 
\begin{equation}\label{eq: minimisation delta with psi}
	\min_{\varphi\in W^{1,2}(M_\delta,\R)}\int_{M_\delta}|\d \varphi-ju_\alpha|^2.
\end{equation}
Observe that the minimiser $\varphi_\delta$ of \eqref{eq: minimisation delta with psi} exists unique (up to a constant) by convexity of the energy. Also, $\varphi_\delta$ is harmonic in $M_\delta$ since it satisfies weakly
\begin{equation*}
	-\Delta\psi_\delta=\d^*\d\psi_\delta=\d^*ju_\alpha=0\quad\text{in }M_\delta
\end{equation*}
plus the natural boundary condition 
\begin{equation*}
	\partial_\nu\varphi_\delta=\langle ju_\alpha,\nu\rangle\quad\text{on }\partial M_\delta.
\end{equation*}
\begin{remark}
	If there exists an optimal harmonic map $u_\circ\in \mathscr{H}(M,\llb\Gamma\rrb)$, namely a strict minimiser of 
	\begin{equation*}
		\inf_{u\in\mathscr{H}(M,\llb\Gamma\rrb)}e(u),
	\end{equation*}
	then the minimisation over $W_\Gamma^{1,2}(M_\delta,\s^1)$ will pick the sector corresponding to the optimal map $u_\circ$, at least for $\delta$ sufficiently small. This is a consequence of the energy expansion of Theorem \ref{thm: expansion udelta}. 
\end{remark}

\subsection{$p$-harmonic maps to $\s^1$}\label{subs: pharm}
Let $p\in (1,2)$. As a relaxation of the Dirichlet energy, we consider 
\begin{equation}\label{eq: p energy}
	E_p(u)=\int_{M}|\d u|^p.
\end{equation}
Energy \eqref{eq: p energy} is well defined on maps with a singularities as the ones in $\mathscr{H}(M,\llb\Gamma\rrb)$ since the singularity is $W^{1,p}$. For this reason, we can minimise in the entire space and rather than the degree we can prescribe directly the distributional Jacobian (which makes sense, as in \eqref{eq: dist jac}, for maps in $W^{1,p}$). More precisely, we consider 
\begin{equation}\label{eq: p minim}
	\min_{u\in W^{1,p}_\Gamma(M,\s^1)} E_p(u)
\end{equation} 
where 
\begin{equation*}
	W^{1,p}_\Gamma(M,\s^1)\coloneqq \left\lbrace u\in W^{1,p}(M,\s^1) : Ju=2\pi \star\llb\Gamma\rrb\right\rbrace.
\end{equation*}
As before, we can rephrase \eqref{eq: p minim} as a double minimisation problem
\begin{equation*}
	\min_{u\in W^{1,p}_\Gamma(M,\s^1)} E_p(u)=\min_{u_\alpha\in \mathscr{H}(M,\llb\Gamma\rrb)}\min_{u\in W^{1,p}_\alpha(M,\s^1)}E_p(u)
\end{equation*}
where
\begin{equation*}
	W^{1,p}_\alpha(M,\s^1)\coloneqq \{u=e^{-i\varphi}u_\alpha:\varphi\in W^{1,p}(M,\R)\}.
\end{equation*}
Indeed, the condition $Ju=2\pi \star\llb\Gamma\rrb$ forces $\psi=\psi(\llb\Gamma\rrb)$ in the Hodge decomposition $ju=\d\varphi+\d^*\psi+\omega$. As above, we can consider equivalence classes modulo $W^{1,p}$ phases and find that, for each of these classes, there is a representative in $\mathscr{H}(M,\llb\Gamma\rrb)$.

 The problem of minimising $E_p$ in $W^{1,p}_\alpha(M,\s^1)$ admits a unique (up to a constant phase) solution $u_p=e^{-i\varphi_p}u_\alpha$. Existence follows as in $E_\delta$, noting that the Lagrangian $|\d\varphi-ju_\circ|^p$ is convex. Lastly observe that, by \eqref{eq: algebraic properties J},
\begin{equation}\label{eq: phase does not change singular set}
	Ju_p= J(e^{i\varphi_p})+Ju_\circ=Ju_\circ=2\pi \star \llbracket\Gamma\rrbracket,
\end{equation}
so the singular set of $u_p$ is still $\llb\Gamma\rrb$. 

\subsection{Fractional harmonic maps to $\s^1$}
Let $s\in (\tfrac12,1)$ and consider the energy 
\begin{equation}\label{eq: fractional energy}
	E_s(u)=[u]_{H^{s}(M)}^2
\end{equation}
among all maps $u\colon M\to\s^1$ such that the Sobolev seminorm 
\begin{equation*}
	[u]_{H^{s}(M)}^2\coloneqq\iint_{M\times M}|u(x)-u(y)|^2K_s(x,y)dxdy
\end{equation*}
is finite. Fractional Sobolev spaces on manifolds were recently introduced in \cite{Caselli-Florit-Serra2024}. The fractional kenrel $K_s$ is obtained by the heat kernel $H_M$ of $M$ via 
\begin{equation*}
	K_s(x,y)=\frac{s}{\Gamma(1-s)}\int_0^\infty H_M(x,y,t)\frac{dt}{t^{1+s}}
\end{equation*}
The renormalisation constant ${s}/{\Gamma(1-s)}$ is guarantees that for any $v\in H^1(M)$
\begin{equation*}
	E_s(v)\to \int_{M}|\d v|^2
\end{equation*}
as $s\to 1^-$.

\medskip

Here, we prescribe the singular set in the sense of Definition \ref{def: singular set Hs}, using the distributional Jacobian for $H^s$ functions. Recall again that maps $u\in H^s(M,\s^1)$ have a factorisation $u=e^{i\varphi}v$, where $\varphi\in H^s(M,\R)$ and $v\in W^{1,2s}(M,\R)$, and the distributional Jacobian of $u$ is defined as $\widetilde Ju\coloneqq Jv$. More precisely, we minimise
\begin{equation}\label{eq: s minim}
	\min_{u\in H^s_\Gamma(M,\s^1)} E_s(u)
\end{equation}
where $H^s_\Gamma(M,\s^1)=\{u\in H^s(M,\s^1):\widetilde Ju=2\pi\star\llb\Gamma\rrb\}$.
Factorisation tells us that if we consider the quotient space of $H^s$ maps modulo $H^s$ phases, each class will have a $W^{1,2s}$ representative. The same argument of \S\ref{subs: pharm} then applies and, once again, we find an equivalence of \eqref{eq: s minim} with a double minimisation problem
\begin{equation*}
	\min_{u\in H^s_\Gamma(M,\s^1)} E_s(u)=\min_{u_\alpha\in \mathscr{H}(M,\llb\Gamma\rrb)}\min_{u\in H^s_\alpha(M,\s^1)}E_p(u)
\end{equation*}
where
\begin{equation*}
	H^s_\alpha(M,\s^1)\coloneqq \{u=e^{-i\varphi}u_\alpha:\varphi\in H^s(M,\R)\}.
\end{equation*}
The map $\varphi\mapsto E_s(e^{i\varphi}u_\circ)$ is weakly lower-semicontinuous in the $H^s$ topology, which implies the existence of a minimiser $\varphi_s$ by direct method. Moreover, by the very definition of $\widetilde J$, we have 
\begin{equation*}
	\widetilde J(e^{-i\varphi_s}u_\alpha)=\widetilde Ju_\alpha=2\pi\star\llb\Gamma\rrb.
\end{equation*}

\begin{remark}\label{rem: Fubini}
	Functions in $W^{1,p}$ and $H^{s}$ are in principle not regular enough to define a ``degree''. However, one can still make sense of the notion of linking around $\Gamma$ by observing that such functions are regular on sets of the form $\{|z|=r\}$, where $z$ are the normal Fermi coordinates and $r$ is small. For instance, if $u\in W^{1,p}$,
	\begin{equation*}
		\int_\Gamma \d y \int_{0}^{\delta}\d r\int_{\{|z|=r\}}|\d u|^p\leq C(\Gamma,M,\delta)\int_{T_\delta}|\d u|^p<+\infty.
	\end{equation*}
	which implies that $u\vert_{\{|z|=r\}}\in W^{1,p}(\{|z|=r\},\s^1)$ for a.e.~ $r\in(0,\delta)$. In particular, $u\vert_{\{|z|=r\}}$ is continuous for a.e.~ $r\in(0,\delta)$ and $\deg(u|{\{|z|=r\}})$ makes sense. A similar argument works for $u\in H^s$.
\end{remark}

\subsection{Convergence to the expansion of $u_\delta$}

The goal of this section is to prove an energy expansion for the minimiser $u_\delta$ to 
\begin{equation}\label{eq: minimisation delta with u}
	\min_{u\in W^{1,2}_\circ(M_\delta,\s^1)}E_\delta(u)
\end{equation}
arising in \eqref{eq: double minimisation}, where $W^{1,2}_\circ(M_\delta,\s^1)=\{e^{-i\varphi}u_\circ:\varphi\in W^{1,2}(M_\delta,\R)\}$, being $u_\circ$ a fixed choice of harmonic map in $\mathscr{H}(M,\llb\Gamma\rrb)$. We establish that the energy of the minimiser of \eqref{eq: minimisation delta with u} coincides up to constant order with the energy \eqref{eq: expansion ucirc} of the harmonic map $u_\circ$ it was constructed from. 
\begin{theorem}\label{thm: expansion udelta}
	Let $u_\delta$ be the minimiser arising from \eqref{eq: minimisation delta with u}. Then, there is $\gamma\in(0,1)$ such that its energy satisfies 
	\begin{equation}\label{eq: expansion udelta}	E_\delta(u_\delta)=2\pi\mathcal{H}^{n-2}(\Gamma)\log(1/\delta)+W_M(\Gamma)+e(u_\circ)+\O(\delta^{\gamma})
	\end{equation}
	as $\delta\to 0$.
\end{theorem}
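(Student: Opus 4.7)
The plan is to combine an easy upper bound with a more delicate lower bound, both measured against the energy of $u_\circ$, for which we already have the expansion from Proposition \ref{prop: expansion can harm map}.

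\emph{Upper bound.} Since $u_\circ$ is admissible in the minimisation \eqref{eq: minimisation delta with u} (take $\varphi\equiv 0$), by minimality $E_\delta(u_\delta)\leq E_\delta(u_\circ)$. Proposition \ref{prop: expansion can harm map} then yields
\begin{equation*}
	E_\delta(u_\delta)\leq 2\pi\mathcal{H}^{n-2}(\Gamma)\log(1/\delta)+W_M(\Gamma)+e(u_\circ)+\O(\delta^\beta).
\end{equation*}

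\emph{Reduction to a scalar estimate.} Writing $u_\delta=e^{-i\varphi_\delta}u_\circ$ with $\varphi_\delta\in W^{1,2}(M_\delta,\R)$, we have $j u_\delta=j u_\circ-\d\varphi_\delta$, so $E_\delta(u_\delta)=\int_{M_\delta}|\d\varphi_\delta-j u_\circ|^2$. Testing the Euler--Lagrange equation with $\eta=\varphi_\delta$ gives $\int_{M_\delta}|\d\varphi_\delta|^2=\int_{M_\delta}(\d\varphi_\delta,j u_\circ)_g$, and expanding the square,
\begin{equation*}
	E_\delta(u_\delta)=E_\delta(u_\circ)-\int_{M_\delta}|\d\varphi_\delta|^2.
\end{equation*}
Thus the theorem reduces to showing $\int_{M_\delta}|\d\varphi_\delta|^2=\O(\delta^\gamma)$ for some $\gamma>0$. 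Since $u_\circ$ is classically harmonic on $M\setminus\Gamma$, equation \eqref{eq: codiff ju} gives $\d^*j u_\circ=0$ in $M_\delta$; after normalising $\varphi_\delta$ to have zero mean on $\partial M_\delta$ (which is admissible since $\int_{\partial M_\delta} g\,d\sigma=-\int_{M_\delta}\d^*j u_\circ=0$), integration by parts produces
\begin{equation*}
	\int_{M_\delta}|\d\varphi_\delta|^2=\int_{\partial M_\delta}\varphi_\delta\, g\,d\sigma,\qquad g:=\langle j u_\circ,\nu\rangle.
\end{equation*}

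\emph{Smallness of the boundary flux.} By Lemma \ref{lem: propertis Hscr}(4), on $T_{\delta_0}$ we may write $u_\circ=e^{i\lambda(y,z)}\frac{z}{|z|}$ with $\lambda\in W^{1,q}$ for every $q<+\infty$, so that $j u_\circ=\d\lambda+\d\theta$, where $\d\theta=|z|^{-2}(-z_2\,\d z_1+z_1\,\d z_2)$. The form $\d\theta$ is tangent to the Euclidean spheres $\{|z|=\delta\}$, and by the Fermi metric expansion \eqref{eq: Fermi metric} the normal $\nu$ and the metric pairing differ from their Euclidean counterparts by $\O(|z|^2)$; combined with $|\d\theta|=\O(\delta^{-1})$ this gives $|\langle\d\theta,\nu\rangle|=\O(\delta)$ pointwise on $\partial M_\delta$. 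For the term $\langle\d\lambda,\nu\rangle$, the $W^{1,q}$ and $C^{0,\alpha}$ regularity of $\lambda$, together with a Fubini/slicing argument on the radial layers $\{|z|=r\}$, produce a bound of the form $\|g\|_{L^p(\partial M_\delta)}=\O(\delta^{\gamma'})$ for suitable $p>1$ and $\gamma'>0$.

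\emph{Closing the estimate and the main obstacle.} Combining with H\"older's inequality,
\begin{equation*}
	\int_{M_\delta}|\d\varphi_\delta|^2\leq \|\varphi_\delta\|_{L^{p'}(\partial M_\delta)}\|g\|_{L^p(\partial M_\delta)},
\end{equation*}
so it suffices to control the trace of $\varphi_\delta$ on the shrinking surface $\partial M_\delta$. The main obstacle is precisely this: $\varphi_\delta$ solves a Neumann problem on a domain whose boundary degenerates as $\delta\to 0$, so the standard trace and Poincar\'e constants are $\delta$-dependent and must be tracked explicitly. This is where the De Giorgi iteration scheme mentioned in the introduction enters: localising near $\partial M_\delta$ in Fermi coordinates and running an iteration on super-level sets of $\varphi_\delta$, one obtains an $L^\infty$ bound $\|\varphi_\delta\|_{L^\infty(M_\delta)}\leq C$ uniform in $\delta$. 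Since $|\partial M_\delta|=\O(\delta)$, this combined with the previous step yields $\int_{M_\delta}|\d\varphi_\delta|^2\leq C\delta\cdot\|g\|_{L^1(\partial M_\delta)}\leq C\delta^\gamma$, completing the proof upon choosing $\gamma$ appropriately in terms of $\beta$ and $\gamma'$.
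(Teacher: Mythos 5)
Your skeleton is essentially the paper's: the upper bound by minimality plus Proposition \ref{prop: expansion can harm map}, and the reduction of the lower bound to showing $\int_{M_\delta}|\d\varphi_\delta|^2=\O(\delta^\gamma)$ through the boundary integral $\int_{\partial M_\delta}\varphi_\delta\,\langle ju_\circ,\nu\rangle$. Your exact identity $E_\delta(u_\delta)=E_\delta(u_\circ)-\int_{M_\delta}|\d\varphi_\delta|^2$ is in fact cleaner than the paper's Cauchy--Schwarz comparison (which loses a factor $|\log\delta|^{1/2}$). The problem is that the two quantitative inputs your argument hinges on are exactly the points left unproved, and as stated they do not follow from what you invoke.

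First, the flux decay. You claim $\|\langle \d\lambda,\nu\rangle\|_{L^p(\partial M_\delta)}=\O(\delta^{\gamma'})$ from $\lambda\in W^{1,q}$ ``by a Fubini/slicing argument''. Fubini in Fermi coordinates only controls the trace of $\d\lambda$ on \emph{almost every} slice $\{|z|=r\}$, while the expansion must hold for every $\delta\to 0$; and even on good slices, $W^{1,q}$ regularity gives finiteness of the slice norms, not decay in $r$ --- indeed $\d\lambda$ is expected to blow up like $|z|^{\gamma-1}$ near $\Gamma$. The paper instead uses the pointwise bound $|\partial_\nu\lambda|\vert_{\partial T_\delta}\leq C\delta^{\gamma-1}$, coming from the elliptic analysis behind Lemma \ref{lem: propertis Hscr}(4) (the right-hand side $\d^*ju_*$ is of size $|z|^{-1}$), which together with $|\partial T_\delta|=\O(\delta)$ yields the needed $L^1$ flux bound $\O(\delta^{\gamma})$. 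Second, the uniform bound on $\varphi_\delta$. You propose to run De Giorgi directly on $\varphi_\delta$, but its Neumann datum is precisely $g=\langle ju_\circ,\nu\rangle$, which is \emph{not} uniformly bounded (it is of size $\delta^{\gamma-1}$ on $\partial T_\delta$), so the Caccioppoli inequalities near the shrinking boundary carry boundary terms your sketch never controls; moreover $\varphi_\delta$ is only defined up to a constant, so at best one bounds an oscillation, not $\|\varphi_\delta\|_{L^\infty}$. This is exactly why the paper writes $\varphi_\delta=\psi_\delta+\lambda$ with $\lambda$ bounded (H\"older) and $\psi_\delta$ solving a \emph{homogeneous} Neumann problem with right-hand side $\Delta_M\lambda\in L^q_yL^p_z$, and then proves the $\delta$-uniform De Giorgi estimate of Lemma \ref{lem: regularity decay} on the annuli $T_r\setminus T_\delta$. (Your final display also double-counts the boundary measure: with $\|\varphi_\delta\|_\infty\leq C$ one gets $\int_{\partial M_\delta}\varphi_\delta g\leq C\|g\|_{L^1(\partial M_\delta)}$, without the extra factor $\delta$; this is cosmetic.) With these two steps replaced by the bound from Lemma \ref{lem: propertis Hscr}(4) and by Lemma \ref{lem: regularity decay}, your argument closes and indeed gives the $\O(\delta^\gamma)$ error directly.
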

Before proving Theorem \ref{thm: expansion udelta} we show a H\"older regularity lemma for a class of right--hand sides with the same regularity as $\Delta_M\lambda$, where $\lambda$ is the function arising from Lemma \ref{lem: propertis Hscr}.
\begin{lemma}\label{lem: regularity decay}
	Let $0<\delta<\delta'$ be sufficiently small and let $\psi$ be a solution to 
	\begin{equation*}
		\begin{cases}
			-\Delta_M\psi=\widetilde f&\text{in }M_\delta\\
			\partial_\nu\psi=0 &\text{on }\partial M_\delta
		\end{cases}
	\end{equation*}
	where $\widetilde f\colon M\to \R$ is regular outside of $T_{\delta'}$ and inside $T_{\delta'}$, denoting with $(y,z)$ Fermi coordinates, $\widetilde f$ is Lipschitz in $y$ and $L^p$ in $z$, for some $p\in (1,2)$. Then there are constants $C_\circ>0$ and $\gamma\in (0,1)$, independent on $\delta$, such that for every $r\in[\delta,\delta')$ there exists a function $\alpha_{r,\delta}\in W^{1,\infty}(\Gamma)$ for which
	\begin{equation*}
		\|\psi-\alpha_{r,\delta}\|_{L^\infty(T_r\setminus T_\delta)}\leq C_\circ r^\gamma.
	\end{equation*}
	\begin{proof}
	We divide the proof in four steps.
	
	\medskip
	
	{\bf Step 1} (Reduction to two dimensions).
		As we did in Lemma \ref{lem: propertis Hscr}, we approximate $T_{\delta'}\setminus T_\delta$ with a product space $\Gamma\times A_{\delta',\delta}$, where $A_{\delta',\delta}\coloneqq B_{\delta'}\setminus B_\delta\subset\R^2$ is a planar annulus. With this identification, we know that $\widetilde f\in L^p_zL^q_y(\Gamma\times A_{\delta',\delta})$ for any $q\in (1,+\infty)$, which by Calder\'on--Zygmund implies that $\psi\in L^p_zW^{2,q}_y(\Gamma\times A_{\delta',\delta})\subset L^p_zW^{1,\infty}_y(\Gamma\times A_{\delta',\delta})$. Using the Lipschitz-in-$y$ property of the right-hand side, we can apply the same reasoning to $D_y\widetilde f$ and obtain that $D_y\psi\in L^p_zW^{1,\infty}_y(\Gamma\times A_{\delta',\delta})$, with norm bounded by $\|\widetilde f\|_{L^p_xW^{1,\infty}_y(\Gamma\times B_{\delta'})}$, which is independent on $\delta$. But then, 
		\begin{equation*}
			\|\Delta_y \psi\|_{L^p_zL^\infty_y}\leq \|D_y \psi\|_{L^p_zW^{1,\infty}_y}\leq \|\widetilde f\|_{L^p_zW^{1,\infty}_y}.
		\end{equation*}
		Remark that, by Minkowski's integral inequality 
		\begin{equation*}
			\|\Delta_y \psi\|_{L^\infty_yL^p_z}\leq \|\Delta_y \psi\|_{L^p_zL^\infty_y},
		\end{equation*}
		see for instance \cite{Grey-Sinnamon2016}.
		Then, we can denote $f\coloneqq \widetilde f+\Delta_y\psi\in L^\infty_yL^p_z(\Gamma\times A_{\delta',\delta})$ and note that $\psi_y\coloneqq\psi(\cdot,y)$ solves
		\begin{equation}\label{eq: 2d problem}
		\begin{cases}
			-\Delta_z\psi_y= f&\text{in }A_{\delta',\delta}\\
			\partial_\nu\psi_y=0 &\text{on }\partial B_\delta
		\end{cases}
	\end{equation}	
	for a.e.~ $y\in \Gamma$ and with a right-hand side bounded uniformly in $y$ (and in $\delta$). In this way we reduced the regularity theory for the full problem to the regularity theory for a two-dimensional one \eqref{eq: 2d problem}, where the singularity of the right-hand side actually plays a role. Our claim is that, for a.e.~ $y\in\Gamma$ and $r\in[\delta,\delta'),$
	\begin{equation}\label{eq: claim 2d}
		\left\Vert\psi_y-\fint_{A_{r,\delta}}\psi_y\right\Vert_{L^\infty(A_{r,\delta})}\leq C_\circ r^\gamma
	\end{equation}
	for $C_\circ$ and $\gamma$ independent on both $y$ and $\delta$. Taking the supremum in $y$ the thesis follows, thus we will focus on proving \eqref{eq: claim 2d} knowing \eqref{eq: 2d problem} for a generic right-hand side of class $L^p$. In what follows, we drop the subscript $y$ from $\psi_y$.
	
	\medskip
	
	{\bf Step 2} ($L^2$ bounds). The proof of the claim follows a classical De Giorgi iteration argument. Let $\bar\psi$ be the average of $\psi$ on the annulus. Then, integrating by parts, using H\"older inequality and Sobolev embedding
	\begin{equation*}
		\|\d\psi\|^2_{L^2}\leq \|f(\psi-\bar\psi)\|_{L^1}\leq \|f\|_{L^p}\|\psi-\bar\psi\|_{L^{p'}}\leq \|f\|_{L^p}\|\d\psi\|_{L^2}
	\end{equation*}
	where all the norms are in the annulus. It follows that $\|\d\psi\|_{L^2}\leq \|f\|_{L^p}$. Recall that the Poincar\'e constant $C_P$ on $A_{\delta',\delta}$ can be chosen uniform in $\delta$ due to the geometry of the domain, see for example \cite{AdamsFournier77}. Then, we get
\begin{equation*}
	\int_{A_{\delta',\delta}}|\psi-\bar\psi|^2\leq C_P\int_{A_{\delta',\delta}}|\d \psi|^2\leq C_P\|f\|_{L^p}.
\end{equation*}
Thus the $L^2$ norm of $\psi-\bar\psi$ is bounded, uniformly in $\delta$. Truncating $\psi-\bar\psi$ at a sufficiently large value $\mu>0$ we can make its norm arbitrarily small. Indeed, let $\psi_\mu\coloneqq (\psi-\bar\psi-\mu)_+$; by Chebyshev
		\begin{equation*}
			|\{\psi_\mu>0\}|\leq \frac{1}{\mu^2}\|\psi-\bar\psi\|^2_{L^2(A_{\delta',\delta})}\leq\frac{C}{\mu^2}.
		\end{equation*}
		Then, by using H\"older and Sobolev--Poincaré, 
		\begin{equation}\label{eq: decay L2norm truncation}
			\|\psi_\mu\|_{L^2(A_{\delta',\delta})}\leq \|\psi_\mu\|_{L^{q}(A_{\delta',\delta})}|\{\psi_\mu>0\}|^{1/q'}\leq C\|\d\psi\|_{L^2(A_{\delta',\delta})}|\{\psi_\mu>0\}|^{1/q'}\leq \frac{C}{\mu^{2/q'}} 
		\end{equation}
		for any $q>1$, with a constant independent on $\delta$.

	\medskip
	
	{\bf Step 3} ($L^2$ to $L^\infty$).
	Next we show that the above $L^2$ bounds imply $L^\infty$ bounds smaller annuli. First, note that Caccioppoli's inequality for any subsolution $v$ of 
		\begin{equation*}
			\begin{cases}
				-\Delta v\leq f &\text{in }A_{\delta',\delta}\\
				\partial_\nu v\leq 0 &\text{on }\partial B_\delta.
			\end{cases}
		\end{equation*}
		and any $\varphi\in C^\infty_c(B_{\delta_\circ})$, reads 
		\begin{equation}\label{eq: Caccioppoli}
			\int_{A_{\delta',\delta}} |\d(\varphi v)|^2\leq\|\d\varphi\|^2_\infty\int_{A_{\delta',\delta}} |v|^2+\int_{A_{\delta',\delta}}f\varphi^2v.
		\end{equation}
		Set 
		\begin{equation*}
			A_{k}\coloneqq T_{\delta'(1/2+2^{-k-1})}\setminus T_\delta.
		\end{equation*}
		and observe that $A_0=A_{\delta',\delta}$ and $A_k$ converges to $A_{\delta'/2,\delta}$. By \eqref{eq: decay L2norm truncation}, up to truncating at a sufficiently high level, we can assume that $\psi$ is a subsolution satisfying $\|\psi\|_{L^2(A_{\delta',\delta})}\leq \epsilon \ll 1$ and note that such $\epsilon$ can be chosen independent on $\delta$. Let $\varphi_k$ be radial, smooth cut-off functions, $0\leq \varphi_k\leq 1$, such that $\varphi_k$ is identically $1$ in $A_k$ and vanishes outside of $A_{k-1}$. This implies in particular that $|\d \varphi_k|\leq C2^k$ in $A_{k-1}\setminus A_k$. Set, moreover, 
		\begin{equation*}
			\psi_k\coloneqq (\psi-(1-2^{-k}))_+\quad\text{and}\quad \Psi_k\coloneqq \int (\varphi_k\psi_k)^2.
		\end{equation*}
		Fix $q>1$ large. By H\"older inequality
		\begin{align*}
			\Psi_{k+1}\leq \left(\int(\varphi_{k+1}\psi_{k+1})^{q}\right)^{2/q}|\{\varphi_{k+1}\psi_{k+1}>0\}|^{1-\frac{1}{q}}
		\end{align*}
		and by Chebyshev, we get
		\begin{align}\label{eq: Cheby bound}
			|\{\varphi_{k+1}\psi_{k+1}>0\}|\leq |\{\varphi_{k}\psi_{k}>2^{-k-1}\}|\leq 4^{k+1}\Psi_k.
		\end{align}
		Lastly, by Sobolev and Caccioppoli
		\begin{align*}
			\left(\int(\varphi_{k+1}\psi_{k+1})^{q}\right)^{2/q}&\leq C\int |\d (\varphi_{k+1}\psi_{k+1})|^2\\
			&\leq C4^k\int_{\operatorname{supp}\varphi_{k+1}}|\psi_{k+1}|^2+\int f\varphi_{k+1}^2\psi_{k+1}\\
			&\leq C4^k\Psi_k+\|f\|_p\left(\int(\varphi_{k+1}\psi_{k+1})^{q}\right)^{1/q}|\{\varphi_{k+1}\psi_{k+1}>0\}|^{1-\frac{1}{p}-\frac1q}\\
			&\leq C4^k\Psi_k+\eta\left(\int(\varphi_{k+1}\psi_{k+1})^{q}\right)^{2/q}\\
			&\quad +c(\eta)\|f\|_p^2|\{\varphi_{k+1}\psi_{k+1}>0\}|^{2-\frac{2}{p}-\frac2q}
		\end{align*}
	Thus, up to choosing $\eta$ small enough we can absorb the second term to the left-hand side, and, choosing $q$ large enough, \eqref{eq: Cheby bound} yields the recursive relation
		\begin{equation*}
			\Psi_{k+1}\leq C4^k\Psi_k^{1+\epsilon}.
		\end{equation*}
		Choosing the initial truncation $\Psi_0$ sufficiently small, it follows that $\Psi_k\to 0$ as $k\to \infty$, which implies that $\psi$ is bounded in $A_{\delta'/2,\delta}$.
		
		\medskip
		
		{\bf Step 4} (H\"older decay). After renormalisation, suppose that $\psi\leq 1$ in $A_{r,\delta}$ for some $r<\delta'/2$. We claim that there exists a $\lambda>0$ independent on $\delta$ such that 
		\begin{equation*}
			\psi\leq 1-\lambda\quad\text{in }A_{r/2,\delta}.
		\end{equation*}
		By iteration, this completes the proof. Suppose that $|\{\psi\leq 0\}|\geq |A_{r,\delta}|/2$ (otherwise, we take $-\psi$). Then, define a new sequence 
		\begin{equation*}
			\psi_k\coloneqq 2^k(\psi-(1-2^k))_+.
		\end{equation*}
		Since $0\leq \psi_k\leq 1$, by Caccioppoli inequality 
		\begin{equation*}
			\int_{A_r}|\nabla\psi_k|^2\leq C.
		\end{equation*}
		We apply De Giorgi's isoperimetric inequality (see \cite{FernandezReal-RosOton}) on $\psi_k$, as long as 
		\begin{equation*}
			\int_{A_{r,\delta}}\psi_{k+1}^2\geq \epsilon^2.
		\end{equation*}
		Since 
		\begin{equation*}
			\left|\{\psi_k\geq 1/2\}\cap A_{r,\delta}\right|\geq\left|\{\psi_{k+1}>0\}\cap A_{r,\delta}\right| \geq \int_{A_{r,\delta}}\psi_{k+1}^2\geq \epsilon^2
		\end{equation*}
		we obtain 
		\begin{equation*}
			\left|\{0\leq \psi_k\leq 1/2\}\cap A_{r,\delta}\right|\geq \beta>0.
		\end{equation*}
		Since the sets $\{0\leq \psi_k\leq 1/2\}$ are disjoint, this implies that for some $k_\circ>0$
		\begin{equation*}
			\int_{A_{r,\delta}}\psi_{k_\circ+1}^2< \epsilon^2.
		\end{equation*}
		Then, by Step 3, 
		\begin{equation*}
			\psi_{k_\circ+1}<1/2\quad\text{in }{A_{r/2,\delta}}.
		\end{equation*}
		Scaling back to $\psi$ the conclusion follows.
	\end{proof}
\end{lemma}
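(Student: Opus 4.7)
The plan is to establish this Hölder-type decay in two movements: first reduce the full PDE on the tubular neighbourhood to a one-parameter family of two-dimensional problems on annuli, then attack the 2D problem by a De Giorgi iteration that gives uniform $L^\infty$ bounds followed by an oscillation-decay argument. The function $\alpha_{r,\delta}(y)$ will arise as the mean of $\psi(y,\cdot)$ on the planar annulus $A_{r,\delta} = B_r\setminus B_\delta\subset\R^2$ in normal Fermi coordinates.

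For the reduction step, I would use the Fermi coordinate expression \eqref{eq: Fermi metric} of the metric to write $\Delta_M = \Delta_z + \Delta_{g_z} + O(|z|^2)D^2$, where $\Delta_z$ is the flat Laplacian in the two normal variables. Since $\widetilde f$ is Lipschitz in $y$ and $L^p$ in $z$, I can differentiate the equation twice in $y$ and apply anisotropic Calderón--Zygmund estimates (as in the proof of Lemma \ref{lem: propertis Hscr}) to conclude that $D_y^2\psi\in L^p_zL^\infty_y$ on $\Gamma\times A_{\delta',\delta}$, with norm controlled by data that is independent of $\delta$. Setting $f\coloneqq \widetilde f+\Delta_y\psi\in L^\infty_yL^p_z$, for a.e.\ fixed $y\in\Gamma$ the slice $\psi_y\coloneqq \psi(\cdot,y)$ then satisfies a two-dimensional Neumann problem on $A_{\delta',\delta}$ with RHS $f(\cdot,y)$ whose $L^p$ norm is bounded uniformly in $y$ and $\delta$. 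The target estimate reduces to proving, for each fixed $y$ and with all constants uniform in $y$ and $\delta$, the bound
\begin{equation*}
\Big\Vert \psi_y-\fint_{A_{r,\delta}}\psi_y\Big\Vert_{L^\infty(A_{r,\delta})}\le C_\circ r^\gamma, \qquad r\in[\delta,\delta').
\end{equation*}

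For the $L^\infty$ bound on the 2D problem, I would first use the equation, integration by parts, and Sobolev--Poincaré on the annulus to obtain $\|\d\psi_y\|_{L^2(A_{\delta',\delta})}\lesssim\|f\|_{L^p}$; the key point is that the Poincaré constant of the annulus can be taken independent of $\delta$ because the family of annuli is obtained by removing a small disc from a fixed larger one. Then a Moser--De Giorgi iteration, using Caccioppoli together with 2D Sobolev embedding and cut-offs concentrated in shrinking annular shells (handling the Neumann portion $\partial B_\delta$ with cut-offs supported away from it), yields an $L^\infty$ bound on truncations $(\psi_y - k)_+$ for $k$ above a threshold proportional to $\|f\|_{L^p}$. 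This gives the boundedness of $\psi_y-\bar\psi_y$ on $A_{\delta'/2,\delta}$ with a constant independent of $\delta$.

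Finally, for the Hölder decay I would iterate an oscillation-reduction lemma: if $\psi_y\le 1$ on $A_{r,\delta}$ and, say, $|\{\psi_y\le 0\}\cap A_{r,\delta}|\ge\tfrac12|A_{r,\delta}|$, then $\psi_y\le 1-\lambda$ on $A_{r/2,\delta}$ for some $\lambda>0$ independent of $\delta$ and $r$. This is obtained by applying De Giorgi's isoperimetric inequality to the sequence $\psi_y^{(k)}\coloneqq 2^k(\psi_y-(1-2^{-k}))_+$, whose gradients are controlled uniformly by Caccioppoli; the isoperimetric inequality prevents every level set $\{0\le \psi_y^{(k)}\le 1/2\}$ from having positive measure indefinitely, so that a finite $k_\circ$ exists with $\|\psi_y^{(k_\circ+1)}\|_{L^2(A_{r,\delta})}<\eps$, after which the $L^\infty$ bound of the previous step promotes this to a pointwise strict inequality on $A_{r/2,\delta}$. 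Iterating the oscillation reduction at dyadic scales from $\delta'$ down to $r$ yields the desired Hölder decay with uniform constants, and setting $\alpha_{r,\delta}(y)$ equal to the average of $\psi_y$ over $A_{r,\delta}$ gives the claimed estimate; the $W^{1,\infty}$ regularity of $\alpha_{r,\delta}$ in $y$ follows from the $W^{1,\infty}_y$ control on $\psi$ derived in Step 1. The main obstacle throughout is tracking that every constant—Poincaré, Caccioppoli, Sobolev embedding, the threshold in De Giorgi's isoperimetric inequality—is uniform in $\delta$, which relies crucially on the annular geometry being comparable, after scaling, to a fixed unit annulus rather than degenerating to a punctured disk as $\delta\to 0$.
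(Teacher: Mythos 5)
Your proposal follows essentially the same route as the paper's proof: the same slicing of the tube into two-dimensional annular Neumann problems via anisotropic Calder\'on--Zygmund estimates with $f=\widetilde f+\Delta_y\psi$, the same uniform-in-$\delta$ Poincar\'e and De Giorgi iteration for the $L^\infty$ bound, the same oscillation-decay step via De Giorgi's isoperimetric inequality applied to $2^k(\psi-(1-2^{-k}))_+$, and the same choice of $\alpha_{r,\delta}$ as the slice average over $A_{r,\delta}$. The only detail worth noting is that after obtaining $\Delta_y\psi\in L^p_zL^\infty_y$ one should invoke Minkowski's integral inequality to pass to $L^\infty_yL^p_z$ before fixing the slice, as the paper does.
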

\begin{proof}[Proof of Theorem \ref{thm: expansion udelta}] 
	By Proposition \ref{prop: expansion can harm map} we know that the sought expression is valid for $u_\circ$ with some $\beta\in(0,1)$, thus we will just compare the energies. We claim that 
	\begin{equation*}
		\left|\int_{M_\delta}|\d u_\delta|^2-\int_{M_\delta}|\d u_\circ|^2\right|\leq C|\log\delta|^{1/2}\delta^{\gamma/2}.
	\end{equation*}
	for some constants $C,\gamma$ independent on $\delta$. The result then follows, up to redefining $\gamma$. Using H\"older inequality and the identity $|jv|=|\d v|$ for every $v\colon M_\delta\to \s^1$, we have
	\begin{align*}
		\left|\int_{M_\delta}|\d u_\delta|^2-\int_{M_\delta}|\d u_\circ|^2\right|&=\left|\int_{M_\delta}|j u_\delta|^2-|j u_\circ|^2\right|\\
		&=\int_{M_\delta}|ju_\delta+j u_\circ|\cdot|ju_\delta-j u_\circ|\\
		&\leq \sqrt{2}\left(\int_{M_\delta}|ju_\delta|^2+|j u_\circ|^2\right)^{1/2}\left(\int_{M_\delta}|ju_\delta-j u_\circ|^2\right)^{1/2}\\
		&\leq 2\left(\int_{M_\delta}|\d u_\circ|^2\right)^{1/2}\left(\int_{M_\delta}|ju_\delta-j u_\circ|^2\right)^{1/2}
	\end{align*}
	where in the last step we used energy minimality of $u_\delta$. A direct integration using Fermi coordinates and Fubini, along with point (4) of Lemma \ref{lem: propertis Hscr}, yields 
	\begin{equation*}
		\int_{M_\delta}|\d u_\circ|^2\leq C(M,\Gamma)|\log\delta|.
	\end{equation*}
	We have only left to estimate $\|ju_\delta-ju_\circ\|_{L^2(M_\delta)}=\|\d\varphi_\delta\|_{L^2(M_\delta)}$, where $\varphi_\delta$ is the solution to \eqref{eq: minimisation delta with psi} with $u_\alpha=u_\circ$. Recall that $\varphi_\delta$ solves 
	\begin{equation*}
		\begin{cases}
			-\Delta_M\varphi_\delta=0&\text{in }M_\delta\\
			\partial_\nu\varphi_\delta=\langle ju_\circ,\nu\rangle &\text{on }\partial M_\delta.
		\end{cases}
	\end{equation*}
	By point (4) of Lemma \ref{lem: propertis Hscr}, there exists $\lambda\in C^{0,\alpha}$ such that $u_\circ=e^{i\lambda}u_*$, where $u_*(y,z)\coloneqq z/|z|$ in $T_{\delta_\circ}$ and $(y,z)$ are Fermi coordinates. Letting $\psi_\delta\coloneqq \varphi_\delta-\lambda$ we see that 
	\begin{equation}\label{eq: system psi}
		\begin{cases}
			-\Delta_M\psi_\delta=f&\text{in }M_\delta\\
			\partial_\nu\psi_\delta=0 &\text{on }\partial M_\delta.
		\end{cases}
	\end{equation}
	where $f\coloneqq \Delta_M\lambda$. Indeed, $u_*$ is not harmonic due to curvature terms, but its Neumann datum vanishes on $\partial T_\delta$ for every $\delta\in (0,\delta_\circ)$. By Lemma \ref{lem: regularity decay}, there exists a function $\alpha_\delta\in W^{1,\infty}(\Gamma)$ such that $\|\psi_\delta-\alpha_\delta\|_{L^\infty(\partial  T_\delta)}\leq C_\circ \delta^\gamma$ for some constants $C_\circ>0$ and $\gamma\in(0,1)$ independent on $\delta$. Using the $\gamma$-H\"older continuity of $\lambda$ and the corresponding bound $|\partial_\nu\lambda|\vert_{\partial T_\delta}\leq C\delta^{\gamma-1}$,  we see that 
	\begin{align*}
		\int_{M_\delta}|\d\varphi_\delta|^2&=\int_{M_\delta}|\d\psi_\delta+\d\lambda|^2\\
		&\leq \int_{\partial M_\delta}|\psi_\delta+\lambda||\partial_\nu\lambda|\\
		&\leq \int_{\partial M_\delta}|\psi_\delta-\alpha_\delta||\partial_\nu\lambda|+\int_{\partial M_\delta}|\alpha_\delta+\lambda||\partial_\nu\lambda|\\
		&\leq C(\delta^{2\gamma}+\delta^\gamma),
	\end{align*}
	for a constant $C$ independent on $\delta$, which concludes the proof.
\end{proof} 
\subsection{The expansions of $u_p$ and $u_s$}
In this section we prove an expansion similar to the one for $u_\delta$ for the $p$-energy and fractional energy minimisers \eqref{eq: p minim} and \eqref{eq: s minim}. As for $u_\delta$, we consider the minimisers $u_p$ and $u_s$ of 
\begin{equation}\label{eq: ps min circ}
	\min_{u\in W^{1,p}_\circ(M,\s^1)}E_p(u)
\quad\text{and}\quad 
	\min_{u\in H^s_\circ(M,\s^1)}E_s(u)
\end{equation}
respectively, for a fixed $u_\circ\in \mathscr{H}(M,\llb\Gamma\rrb)$.
 The idea is to exploit the fact that $u_p$, $u_s$ and $u_\delta$ are minimisers of ``close'' problems. This principle was already used for $p$-harmonic maps from planar domains to spheres, see \cite{Hardt-Lin1995,Hardt-Lin-Wang1997}.
\begin{theorem}\label{thm: expansion p s}
	Let $u_s,u_p\colon M\to \s^1$ be the minimisers of \eqref{eq: ps min circ}. Then, the energy expansions
	\begin{equation}\label{eq: convergence penergy}
		E_p(u_p)=\frac{2\pi \mathcal{H}^{n-2}(\Gamma)}{2-p}+W_M(\Gamma)+e(u_\circ)+o(1)
	\end{equation}
	and
	\begin{equation}\label{eq: convergence senergy}
		E_s(u_s)=\frac{2\pi \mathcal{H}^{n-2}(\Gamma)}{2-2s}+W_M(\Gamma)+e(u_\circ)+o(1)
	\end{equation}
	hold as $p\to 2^-$ and $s\to 1^-$, respectively.
	\begin{proof}
		We prove first \eqref{eq: convergence penergy}. Given $\epsilon>0$, choose $\delta>0$ such that the $\O(\delta^\beta)$ in the expansion \eqref{eq: expansion udelta} is smaller than $\epsilon$. Choose also $p_*\in (1,2)$ such that for any $p_*<p<2$
		\begin{equation}\label{eq: p estimate 1}
			\left|\frac{1-\delta^{2-p}}{2-p}-\log\delta\right|+\left|\int_{M_\delta}|\d u_p|^p-\int_{M_\delta}|\d u_p|^2\right|<\epsilon.
		\end{equation}  
		We claim that, for $\delta$ small enough,  
		\begin{equation}\label{eq: p estimate 2}
			\int_{T_\delta}|\d u_p|^p\geq \frac{2\pi\delta^{2-p}\mathcal{H}^{n-2}(\Gamma)}{2-p}-\epsilon 
		\end{equation}
		Indeed, the volume element in Fermi coordinates can be expanded as 
		\begin{equation*}
			\sqrt{g(y,z)}=1-H_\Gamma(y)\cdot z+\O(|z|^2)
		\end{equation*}
		where $H_\Gamma$ is the mean curvature vector of $\Gamma$, see \cite{Gray2004} . In particular, one can bound for $|z|<\delta$
		\begin{equation*}
			\sqrt{g(y,z)}\geq 1-c\delta.
		\end{equation*}
		Then, 
		\begin{align*}
			\int_{T_\delta}|\d u_p|^p&\geq \int_\Gamma\int_{B_\delta}|\d u(y,z)|^p\sqrt{g(y,z)}\d V_\Gamma(y)\d z\\
			&\geq (1-c\delta)\int_\Gamma\int_{B_\delta}|\d u(y,z)|^p\d V_\Gamma(y)\d z
		\end{align*}
		By Remark \ref{rem: Fubini} and the degree 1 condition one has, using H\"older inequality
		\begin{equation*}
			2\pi\leq \int_0^{2\pi}|\partial_\vartheta u_p(y,\varrho e^{i\vartheta})|\d\vartheta\leq \left(\int_0^{2\pi}|\partial_\vartheta u_p(y,\varrho e^{i\vartheta})|^p\right)^{1/p}(2\pi)^{1-1/p}
		\end{equation*}
		for a.e. $\varrho\in (0,\delta)$, where $(\varrho,\vartheta)$ are polar coordinates relative to $z\in\C$. This implies
		\begin{equation*}
			\int_0^{2\pi}|\partial_\vartheta u_p(y,\varrho e^{i\vartheta})|^p\geq 2\pi.
		\end{equation*}
		Then, using that $|\d u|^p\geq |\frac1\varrho\partial_\vartheta u|^p$ we find, for $\delta$ sufficiently small,
		\begin{equation}\label{eq: lower bound p energy}
			\begin{split}
				\int_{T_\delta}|\d u_p|^p&\geq (1-c\delta)\int_\Gamma\int_{B_\delta}|\tfrac1\varrho\partial_\vartheta u_p(y,z)|^p\varrho\d V_\Gamma(y)\d \vartheta \d \varrho\\
			&\geq (1-c\delta)2\pi\mathcal{H}^{n-2}(\Gamma)\int_0^\delta \varrho^{1-p}\d \varrho\\
			&\geq \frac{2\pi\delta^{2-p}\mathcal{H}^{n-2}(\Gamma)}{2-p}-\epsilon.
			\end{split}
		\end{equation}
		Combining \eqref{eq: expansion udelta}, \eqref{eq: p estimate 1} and \eqref{eq: p estimate 2}, we get 
		\begin{align*}
			\int_M|\d u_p|^p&=\int_{M_\delta}|\d u_p|^p+\int_{T_\delta}|\d u_p|^p\\
			&\geq \int_{M_\delta}|\d u_p|^2+\int_{T_\delta}|\d u_p|^p-\epsilon \\
			&\geq \int_{M_\delta}|\d u_\delta|^2+\int_{T_\delta}|\d u_p|^p-\epsilon\\
			&\geq 2\pi\mathcal{H}^{n-2}(\Gamma)\log(1/\delta)+W_M(\Gamma)+e(u_\circ)+\int_{T_\delta}|\d u_p|^p-2\epsilon \\
			&\geq \frac{2\pi\mathcal{H}^{n-2}(\Gamma)}{2-p}+W_M(\Gamma)+e(u_\circ)-C\epsilon. 
		\end{align*}
		To find the corresponding upper bound, let $\delta<r<\delta_\circ$ and consider the interpolation map 
		\begin{equation*}
			v\coloneqq 
			\begin{cases}
				u_\delta & \text{in }M_r\\
				\tilde u & \text{in }T_r\setminus T_{r/2}\\
				u_\circ & \text{in }T_{r/2}
			\end{cases}
		\end{equation*}
		where $\tilde u=u_*/|u_*|$ and 
		\begin{equation*}
			u_*(y,z)=\left(\frac{2|z|}{r}-1\right)u_\delta(y,z)+\left(2-\frac{2|z|}{r}\right)u_\circ(y,z)
		\end{equation*}
		for $y\in \Gamma$, $r/2<|z|<r$. Fix again $\epsilon>0$ and note that the same calculation as in \eqref{eq: lower bound p energy} yields, up to choosing $r$ (and $\delta$) sufficiently small, 
		\begin{equation}\label{eq: lower bound 2 energy}
			\int_{T_r\setminus T_\delta}|\d u_\delta|^2\geq 2\pi\mathcal{H}^{n-2}(\Gamma)(\log\tfrac{1}\delta-\log\tfrac{1}r)-\epsilon.
		\end{equation}
		Also, note that the interpolation $\tilde u$ satisfies 
		\begin{equation}\label{eq: lower bound 2 energy in annulus}
			\int_{T_r\setminus T_{r/2}}|\d \tilde u|^2\leq 2\pi\mathcal{H}^{n-2}(\Gamma)\log2+\epsilon.
		\end{equation}
		By \eqref{eq: lower bound 2 energy} and \eqref{eq: lower bound 2 energy in annulus}, we find 
		\begin{equation}\label{eq: second estimate energy p}
			\begin{split}
				\int_{M_{r/2}}|\d v|^2&=\int_{M_\delta}|\d u_\delta|^2-\int_{T_r\setminus T_\delta}|\d u_\delta|^2+\int_{T_r\setminus T_{r/2}}|\d v|^2\\
				&\leq2\pi\mathcal{H}^{n-2}(\Gamma)\log\tfrac{2}r+W_M(\Gamma)+\int_{M}|\omega_\circ|^2+C\epsilon.
			\end{split}
		\end{equation}
		Next, choose again $p_*\in (1,2)$ such that for any $p_*<p<2$
		\begin{equation}\label{eq: p estimate 2 - distance}
			\left|\frac{1-(r/2)^{2-p}}{2-p}-\log\tfrac{2}{r}\right|+\left|\int_{M_{r/2}}|\d v|^p-\int_{M_{r/2}}|\d v|^2\right|<\epsilon
		\end{equation} 
		and note that, by \eqref{eq: second estimate energy p} and \eqref{eq: p estimate 2}, we find 
		\begin{equation*}
			\int_{M_{r/2}}|\d v|^p\leq 2\pi\mathcal{H}^{n-2}(\Gamma)\frac{1-(r/2)^{2-p}}{2-p}+W_M(\Gamma)+e(u_\circ)+C\epsilon.
		\end{equation*}
		By part (4) of Lemma \ref{lem: propertis Hscr} and up to choosing $r$ small enough, we have
		\begin{equation}\label{eq: p estimate ucirc}
			\int_{T_{r/2}}|\d u_\circ|^p\leq 2\pi\mathcal{H}^{n-2}(\Gamma)\frac{(r/2)^{2-p}}{2-p}+\epsilon.
		\end{equation}
		By \eqref{eq: p estimate ucirc} and the minimality of $u_p$, we find the upper bound 
		\begin{equation*}
			\int_{M}|\d u_p|^p\leq \int_{M}|\d v|^p\leq \frac{2\pi\mathcal{H}^{n-2}(\Gamma)}{2-p}+W_M(\Gamma)+e(u_\circ)+C\epsilon.
		\end{equation*}
		which concludes the proof of \eqref{eq: convergence penergy}. 
		
		\medskip
		
		The proof of \eqref{eq: convergence senergy} follows the same lines, but with $1-s$ in place of $2-p$ and  $[u_s]_{H^{s}(M)}^2$ in place of $\int_M|\d u_p|^p$. The upper and lower bounds are obtained analogously after showing the validity of formulae corresponding to \eqref{eq: p estimate 2} and \eqref{eq: p estimate ucirc}, namely 
 	\begin{equation}\label{eq: s estimate 2}
			[u_s]_{H^{s}(T_\delta)}^2\geq \frac{2\pi\delta^{1-s}\mathcal{H}^{n-2}(\Gamma)}{2-2s}-\epsilon 
	\end{equation}
	and 
	\begin{equation}\label{eq: s estimate ucirc}
			[u_\circ]_{H^{s}(T_{r/2})}^2\leq 2\pi\mathcal{H}^{n-2}(\Gamma)\frac{(r/2)^{1-s}}{2-2s}+\epsilon.
		\end{equation}
		To show \eqref{eq: s estimate 2} and \eqref{eq: s estimate ucirc}, we use the following fact. On tubes with a sufficiently small cross-section, the metric becomes ``almost'' diagonal, meaning that the tubular neighbourhood ``almost'' splits as a product space $B_\delta\times \Gamma$. In particular, for every $\epsilon>0$ we can choose $\delta$ small enough such that 
		\begin{equation*}
			\left|[u]_{H^s(T_\delta)}^2-[u]_{H^s(B_\delta\times\Gamma)}^2\right|\leq \epsilon 
		\end{equation*} 
		Then, we use the following equivalent definition of $H^s$ seminorm on an $n$-dimensional manifold $N$ (see \cite{Caselli-Florit-Serra2024})
		\begin{equation*}
			[u]_{H^s(N)}^2=\sum_{k\geq 0}\lambda_k^s\langle u,\phi_k\rangle_{L^2(N)}^2
		\end{equation*}
		where $(\lambda_k,\phi_k)$ are the eigenpairs associated to the Laplace--Beltrami operator on $N$. Now, if $N$ is a product, say $N=N_1\times N_2$, with $(\lambda_k,\phi_k)$ and $(\mu_j,\psi_j)$ being respectively the eigenpairs of $N_1$ and $N_2$, then the eigenpairs of $N$ are given by $(\lambda_k+\mu_j,\phi_k\psi_j)$ and 
		\begin{equation}\label{eq: Hs norm for product}
			[u]_{H^s(N_1\times N_2)}^2=\sum_{k\geq 0}\sum_{j\geq 0}(\lambda_k+\mu_j)^s\left(\int_{N_1\times N_2}u(x,y)\phi_k(x)\psi_j(y)\right)^2.
		\end{equation}
		Using $N_1=B_\delta$, $N_2=\Gamma$ and keeping the same notation for the corresponding eigenpairs, we immediately see that 
		\begin{align*}
			[u_s]_{H^s(B_\delta\times \Gamma)}^2&\geq \sum_{k\geq 0}\lambda_k^s\left(\int_{B_\delta}\left(\frac1{\sqrt{\mathcal{H}^{n-2}(\Gamma)}}\int_{\Gamma}u(x,y)\right)\phi_k(x)\right)^2\\
			&=\mathcal{H}^{n-2}(\Gamma)\left[\overline u_s\right]^2_{H^s(B_\delta)}
		\end{align*}
		where we used that $\mu_0=0$, $\psi_0\equiv \frac1{\sqrt{\mathcal{H}^{n-2}(\Gamma)}}$ and we denoted with
		\begin{equation*}
			\overline{u}_s(z)=\fint_{\Gamma}u_s(z,\cdot)
		\end{equation*}
		the average on $\Gamma$ of $u_s$. By the degree condition on $u_s$, if follows that $\overline{u}_s$ has degree +1 and, by \cite[Lemma 3.6 and Proposition 4.1]{CaselliFregugliaPicenni2024},
		\begin{equation*}
			\left[\overline u_s\right]^2_{H^s(B_\delta)}\geq\frac{2\pi}{2-2s}\delta^{1-s}.
		\end{equation*}
		
	\medskip
	
	To show \eqref{eq: s estimate ucirc}, we approximate once again the tubular neighbourhood $T_\delta$ with a product $B_\delta\times\Gamma$ and use again formula \eqref{eq: Hs norm for product}. Also, by point (4) of Lemma \ref{lem: propertis Hscr} we can approximate $u_\circ$ by the map $u_*(y,z)\coloneqq z/|z|$ arbitrarily well, in the right normal frame. In particular, we have 
	\begin{equation*}
		\left|[u_\circ]_{H^s(T_{r/2})}^2-[u_*]_{H^s(B_{r/2}\times\Gamma)}^2\right|\leq \epsilon 
	\end{equation*}
	Using the $L^2$-orthogonality of $\psi_0$ and $\psi_j$ for every $j\ne 0$, we obtain 
	\begin{align*}
		[u_*]_{H^s(B_{r/2}\times \Gamma)}^2&=\sum_{k\geq 0}\sum_{j\geq 0}(\lambda_k+\mu_j)^s\left(\int_{B_{r/2}\times \Gamma}u_*(x)\phi_k(x)\psi_j(y)\right)^2\\
		&=\mathcal{H}^{n-2}(\Gamma)\sum_{k\geq 0}\lambda_k^s\left(\int_{B_{r/2}}u_*(x)\phi_k(x)\right)^2\\
		&=\mathcal{H}^{n-2}(\Gamma)[u_*]_{H^s(B_{r/2})}^2
	\end{align*}
	where we are slightly abusing notation and using $u_*(z)=z/|z|$ also for the map on the disc.
	Lastly, we use that by \cite[Lemma A.1]{CaselliFregugliaPicenni2024}
	\begin{equation*}
		[u_*]_{H^s(B_{1})}^2=
		\frac{2\pi}{2-2s}+o(1-s)
	\end{equation*}
	Then, using the homogeneity of $u_*$ and scaling, we find that for $s$ sufficiently close to 1,
	\begin{equation*}
		[u_*]_{H^s(B_{r/2})}^2\leq 
		\frac{2\pi}{2-2s}(r/2)^{1-s}+\epsilon.
	\end{equation*}
	\end{proof}
\end{theorem}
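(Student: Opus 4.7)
The strategy is to exploit that $u_p$ and $u_s$ minimise problems that are ``close'' to the one minimised by $u_\delta$, for which Theorem \ref{thm: expansion udelta} already gives the desired expansion. The plan is to establish two-sided matching bounds of the form $E_p(u_p) = \tfrac{2\pi\mathcal{H}^{n-2}(\Gamma)}{2-p} + W_M(\Gamma) + e(u_\circ) + o(1)$ by comparing with $u_\delta$ after choosing $\delta$ small but fixed, and then sending $p \to 2^-$. The same scheme works for $u_s$ with $(1-s)$ replacing $(2-p)$ and the fractional seminorm replacing the $p$-Dirichlet integral, but with an additional layer of analysis to localise the nonlocal norm.

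For the lower bound on $E_p(u_p)$, the plan is to split $\int_M |\d u_p|^p = \int_{M_\delta} + \int_{T_\delta}$. On $M_\delta$, for $p$ close enough to $2$ one has $\int_{M_\delta}|\d u_p|^p \geq \int_{M_\delta}|\d u_p|^2 - \varepsilon$, and by minimality of $u_\delta$ in its topological sector together with \eqref{eq: phase does not change singular set} (which guarantees $u_p$ lies in the same sector), this bounded below by $E_\delta(u_\delta) - \varepsilon$. On $T_\delta$, using Fermi coordinates, Fubini (Remark \ref{rem: Fubini}), and the degree-one condition, a slicewise application of H\"older's inequality in the angular variable gives $\int_0^{2\pi}|\partial_\vartheta u_p|^p \geq 2\pi$ for a.e.\ slice, which after integration in $\varrho$ yields the key lower bound $\int_{T_\delta}|\d u_p|^p \geq \frac{2\pi\delta^{2-p}\mathcal{H}^{n-2}(\Gamma)}{2-p} - \varepsilon$. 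Combining with Theorem \ref{thm: expansion udelta} and the elementary identity $\frac{1-\delta^{2-p}}{2-p} \to \log(1/\delta)$ as $p \to 2$ yields the lower bound.

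For the upper bound, the plan is to build a competitor by gluing: take $u_\delta$ on $M_r$, $u_\circ$ on $T_{r/2}$, and interpolate radially in the annulus $T_r \setminus T_{r/2}$ via $v = u_*/|u_*|$ with $u_* = (\tfrac{2|z|}{r}-1) u_\delta + (2 - \tfrac{2|z|}{r})u_\circ$. The interpolation annulus contributes at most $2\pi\mathcal{H}^{n-2}(\Gamma) \log 2 + \varepsilon$ to the Dirichlet energy (by a direct Fermi-coordinate computation using that both $u_\delta$ and $u_\circ$ asymptotically agree with $z/|z|$ near $\Gamma$). Converting to the $p$-energy using $\int_{M_{r/2}}|\d v|^p = \int_{M_{r/2}}|\d v|^2 + o(1)$ and adding the contribution of $u_\circ$ on $T_{r/2}$, estimated by $\frac{2\pi(r/2)^{2-p}\mathcal{H}^{n-2}(\Gamma)}{2-p} + \varepsilon$ via point (4) of Lemma \ref{lem: propertis Hscr}, produces an upper bound that matches the lower bound after using $\frac{1-(r/2)^{2-p}}{2-p} + \frac{(r/2)^{2-p}}{2-p} = \frac{1}{2-p}$.

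For the fractional case, the structure is identical but the main obstacle is that the $H^s$ seminorm is nonlocal, so localising to $T_\delta$ requires care. The plan is to approximate $T_\delta$ by the product $B_\delta \times \Gamma$, the error being $o(1)$ as $\delta \to 0$ by the near-flatness of the Fermi metric, and to use the spectral representation $[u]^2_{H^s(N_1 \times N_2)} = \sum_{k,j}(\lambda_k + \mu_j)^s \langle u, \phi_k \psi_j \rangle^2$. For the lower bound, projecting onto the zero mode $\psi_0 \equiv 1/\sqrt{\mathcal{H}^{n-2}(\Gamma)}$ in the $\Gamma$ variable gives $[u_s]^2_{H^s(B_\delta \times \Gamma)} \geq \mathcal{H}^{n-2}(\Gamma) [\overline{u}_s]^2_{H^s(B_\delta)}$ where $\overline{u}_s$ is the $\Gamma$-average; since averaging preserves the degree, the sharp fractional estimate $[\overline u_s]^2_{H^s(B_\delta)} \geq \frac{2\pi}{2-2s}\delta^{1-s}$ from \cite{CaselliFregugliaPicenni2024} closes the argument. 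For the upper bound, the analogous local computation on $B_{r/2}$ for $u_*(z) = z/|z|$ uses $[u_*]^2_{H^s(B_1)} = \frac{2\pi}{2-2s} + o(1-s)$ and scaling. The hardest part is ensuring that all the error terms in the product approximation and the interpolation are uniformly controlled as $s \to 1$, so that the double limit (first $\delta, r \to 0$, then $s \to 1$) truly produces $o(1)$.
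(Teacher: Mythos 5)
Your proposal is correct and follows essentially the same route as the paper: the same splitting of the $p$-energy into $M_\delta$ and $T_\delta$ with the slicewise angular H\"older bound and comparison with $u_\delta$ for the lower bound, the same glued competitor $u_\delta$/interpolation/$u_\circ$ for the upper bound, and the same product-approximation plus spectral decomposition argument (with the estimates from \cite{CaselliFregugliaPicenni2024}) for the fractional case. No substantive differences to report.
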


\appendix

\section{The renormalised energy in $\R^n$}\label{app: ren en Rn}
In order to get a better picture of the self-interaction energy of the singular set, we give an explicit expression of renormalised energy in the Euclidean space. We start with the simplest case: the interaction energy between two embedded curves $\gamma_1,\gamma_2$ in $\R^3$. The fundamental solution $G(x,y)$ for the Hodge Laplacian on 1-forms is given by
\begin{equation*}
	G(x,y)=\sum_{j,k=1}^3\frac{\delta_{jk}}{4\pi|x-y|}\d x^j\wedge \d y^k
\end{equation*}
where we remark that the sign follows by the fact that the Hodge Laplacian is the \emph{positive} spectrum Laplacian.
Supposing the curves parametrised by arclength, their orientation is given by 
\begin{equation*}
\vec\gamma_k=\sum_{j=1}^3(\dot\gamma_k)_j\d x^j,\quad k=1,2.
\end{equation*}
Then, we have 
\begin{equation*}
	(G(x,y),\vec\gamma_1(x)\wedge\vec\gamma_2(y))_{g_{\R^3}}=\sum_{j,k=1}^3\frac{\delta_{jk}}{4\pi|x-y|}(\dot\gamma_1(x))_j(\dot\gamma_2(y))_k=\frac{\dot\gamma_1(x)\cdot\dot\gamma_2(y)}{4\pi|x-y|}
\end{equation*}
where the dot product is the scalar product in $\R^3$.
Thus, the interaction energy between $\gamma_1$ and $
\gamma_2$ is given by 
\begin{equation*}
	\int_{\gamma_1}\int_{\gamma_2}\frac{\dot\gamma_1(x)\cdot\dot\gamma_2(y)}{4\pi|x-y|}\d\mathcal{H}^{1}(x)\d\mathcal{H}^{1}(y).
\end{equation*}
Observe that the same formula holds if $\gamma_1$ and $\gamma_2$ are different portions of the same connected component, as the desingularisation of Lemma \ref{lem: decomposition w} happens only on the diagonal. 

\medskip

In higher dimension, the interaction is similar. In $\R^n$, the fundamental solution for the Hodge Laplacian is 
\begin{equation*}
	G(x,y)=\sum_{I\in\mathcal{I}_{n-2}}^3\frac{\d x^I\wedge \d y^I}{(n-2)\omega_n|x-y|^{n-2}}.
\end{equation*} 
If $\Sigma_1$ and $\Sigma_2$ are $(n-2)$-dimensional embedded manifolds in $\R^n$ with respective orientations given by 
\begin{equation*}
	\vec{\Sigma}_k(x)=\sum_{J\in \mathcal{I}_{n-2}}\alpha^k_J(x)\d x^J
\end{equation*}
the interaction becomes 
\begin{equation*}
	\int_{\Sigma_1}\int_{\Sigma_2}\frac{
	\langle\vec{\Sigma}_1(x),\vec{\Sigma}_2(y)\rangle}{(n-2)\omega_n|x-y|^{n-2}}\d\mathcal{H}^{n-2}(x)\d\mathcal{H}^{n-2}(y).
\end{equation*}
where
\begin{equation*}
	\langle\vec{\Sigma}_1(x),\vec{\Sigma}_2(y)\rangle=\sum_{J\in \mathcal{I}_{n-2}}\alpha^1_J(x)\alpha^2_J(y)
\end{equation*}
is the pointwise product of $(n-2)$-forms. Observe that the interaction between orthogonal parts of the submanifold vanishes, while it is maximised on parallel pieces, with a sign depending on the orientation.

\subsection{Renormalised energy and magnetic inductances}
The renormalised energy appears to be also deeply related to the theory of electromagnetism; it corresponds to the energy stored in the magnetic field generated by currents running along wires. Precisely, let $\gamma_1,\dots,\gamma_m$ be $m$ closed wires embedded in $\R^3$ in which currents $I_1,\dots, I_m$ are flowing parallel to the orientations $\vec\gamma_1,\dots,\vec\gamma_m$. The total electric current of the system is given by 
\begin{equation*}
	J=\sum_{j=1}^mI_j\llbracket\gamma_j\rrbracket.
\end{equation*}
The magnetic field $B$ generated by $J$ obeys Maxwell's equations 
\begin{equation*}
	\nabla\times B=J,\quad \nabla\cdot B=0
\end{equation*}
or, in the language of forms (recalling that the magnetic field is identified with a 2-form $B\in\Omega^2(\R^n)$,
\begin{equation*}
	\d^*B=J,\quad \d  B  =0.
\end{equation*}
The second equation implies, by Poincar\'e Lemma, that $B=\d A$ for some 1-form $A$ and by the first equation
\begin{equation*}
	\d^*\d A=J.
\end{equation*}
Exactly as in \eqref{eq: d*A harmonic} we can show that $\d^*A$ is harmonic, thus $\d\d^*A=0$. This means that 
\begin{equation*}
	\Delta A=J
\end{equation*}
which is the same equation satisfied by the $A$ in \S\ref{subs: ren en}, up to modifying the definition to include the current intensities. The magnetic energy is given by
\begin{equation*}
	\int|B|^2=\int|\d A|^2
\end{equation*}
which is exactly the quantity computed in \eqref{eq: square norm singular term}. Thus, integrating in the complement of a $\delta$-tubular neighbourhood of the wires and after a $|\log\delta|$ renormalisation proportional to the length of the wires, we find the well known formula for the inductances
\begin{align*}
	W_{\R^3}(\gamma_1,\dots,\gamma_m)&=\sum_{j=1}^mI_j^2\int_{\gamma_j}\langle R_j(y),\vec\gamma_j(y)\rangle \d\mathcal{H}^{n-2}(y)\\
	&+\sum_{j\ne k}I_jI_k\int_{\gamma_j}\int_{\gamma_k}\frac{ \vec\gamma_j(x)\cdot \vec\gamma_k(y)}{|x-y|}\d\mathcal{H}^{n-2}(x)\d\mathcal{H}^{n-2}(y)
\end{align*}
where the first term is the sum of the (renormalised) self-inductances and the second term the mutual inductances of the wires, see \cite{Jackson1975}.

\bibliography{Bibliography} 
\bibliographystyle{aomalpha}

\end{document}